\numberwithin{equation}{section}
\mathchardef\emptyset="001F
\newtheorem{Theorem}{Theorem}[section]
\newtheorem{Definition}[Theorem]{Definition}
\newtheorem{Proposition}[Theorem]{Proposition}
\newtheorem{Corollary}[Theorem]{Corollary}
\newtheorem{Lemma}[Theorem]{Lemma}
\newcommand{\nada}[1]{}
\newcommand{\eps}{\varepsilon}
\newcommand{\grad}{\nabla}
\newcommand{\homogeneousmap}{\varphi}
\newcommand{\indice}{j}
\newcommand{\mappa}{u}
\newcommand{\mappabuona}{v}
\newcommand{\mres}{\mathbin{\vrule height 1.6ex depth 0pt width
0.13ex\vrule height 0.13ex depth 0pt width 1.3ex}}
\newcommand{\N}{\numberset{N}} 
\newcommand{\newpar}{\alpha}
\newcommand{\numberset}{\mathbb}
\newcommand{\Om}{\Omega} 
\newcommand{\R}{\numberset{R}} 
\newcommand{\radius}{\ell}
\newcommand{\repa}{h} 
\newcommand{\rilP}{\overline P} 
\newcommand{\Rn}{\numberset{R}^n}
\newcommand{\Suno}{{\mathbb S}^1}
\newcommand{\totvarjac}{TV\!J}
\theoremstyle{definition}
\newtheorem{Remark}[Theorem]{Remark}
 \title{Relaxed area of $0$-homogeneous maps
in the strict $BV$-convergence 
}
\author{
Simone Carano\footnote{
Area of Mathematical Analysis, Modelling, and Applications,
             Scuola Internazionale Superiore di Studi Avanzati ``SISSA'',
Via Bonomea, 265 - 34136 Trieste, Italy. E-mail: scarano@sissa.it
                         }    
}
\begin{document}
\maketitle

\begin{abstract}
We compute the relaxed Cartesian area for general $0$-homogeneous map of bounded variation, with respect to the strict $BV$-convergence. In particular, we show that the relaxed area is finite for this class of maps and we provide an integral representation formula.
\end{abstract}

\noindent {\bf Key words:}~~Area functional, relaxation, strict convergence, 
total variation of the Jacobian, Plateau problem, tangential variation.

\vspace{2mm}

\noindent {\bf AMS (MOS) 2022 subject clas\-si\-fi\-ca\-tion:} 49J45, 49Q05, 49Q15, 49Q20, 28A75.

\section{Introduction}\label{sec:introduction}
 Let $\Omega\subset \R^2$ be a bounded open set and 
$v = (v_1,v_2):\Omega\rightarrow\R^2$ be a 
map of class $ C^1(\Omega;\R^2)$.
The graph
of $v$ is 
a Cartesian $2$-manifold in $\Omega\times\R^2\subset\R^{4}$ and its $2$-dimensional Hausdorff measure 
$\mathcal H^2$
and is given by\footnote{Clearly,
\eqref{classical_area} is  finite if $v\in W^{1,1}(\Om;\R^2)$ and $\det\nabla v\in L^1(\Om)$.}
\begin{align}\label{classical_area}
\mathcal A(v;\Omega):=
\int_\Omega\sqrt{1+|\nabla v_1|^2+|\nabla v_2|^2+(J v)^2}dx,
\end{align}
where 
\begin{align}
Jv:=\det \nabla v
=\frac{\partial v_1}{\partial x_1}\frac{\partial v_2}{\partial x_2}-\frac{\partial v_1}{\partial x_2}\frac{\partial v_2}{\partial x_1}
\end{align}
 is the Jacobian determinant of $v$.
As opposite to the case when the map is scalar-valued, the area functional $\mathcal A(\cdot\,; \Omega)$ 
is not convex, but only 
polyconvex in $\nabla v$, and its growth 
is not linear, due to the presence of ${\rm det}\grad v$. \\
It is interesting to extend the notion of area of a graph for singular maps. Following a well established tradition starting from \cite{L} and generalized in \cite{Ser} (see also \cite{DeGiorgi:92, AD}), a typical way to proceed is by relaxation: in order to gain coercivity properties in some  variational problems involving the area functional, a reasonable choice is to relax with respect to the $L^1$-convergence. In this way, we are allowing to define for every $\mappa\in L^1(\Omega;\R^2)$ 
\begin{equation}\label{relaxed L1}
\overline{\mathcal{A}}_{L^1}(\mappa;\Omega)
:=\inf\left\{\liminf_{k\rightarrow 
+\infty}\mathcal{A}(v_k;\Omega):(v_k)\subset C^1(\Omega;\R^2),\;v_k\xrightarrow{L^1} \mappa\right\}.
\end{equation}
It is not difficult to prove that if $\overline{\mathcal{A}}_{L^1}(\mappa;\Omega)<+\infty$ then $\mappa\in BV(\Om;\R^2)$, i.e. the domain of $\overline{\mathcal{A}}_{L^1}(\cdot\,;\Omega)$ is contained in $BV(\Om;\R^2)$. In truth, this inclusion holds strict and a characterization of the domain of the $L^1$-relaxed area is still missing in the literature, as opposite to the case of scalar valued maps, where the domain is $BV(\Om)$ and \eqref{relaxed L1} can be represented as an integral \cite{DalMaso:80,G}.
Moreover, the analysis of \eqref{relaxed L1} turns out to be very challenging, due to its non-local behaviour. Indeed, as conjectured in \cite{DeGiorgi:92} and proved in \cite{AD}, the set function $\overline{\mathcal{A}}_{L^1}(\mappa;\cdot)$ is, in general, not subadditive: In two fundamental examples, the authors provide the existence of a map $\mappa\in BV_{loc}(\R^2;\R^2)$ and of three open sets $\Om_1,\Om_2,\Om_3\subset\R^2$ such that $\Om_3\subset\Om_1\cup\Om_2$ and $\overline{\mathcal{A}}_{L^1}(\mappa;\Omega_3)>\overline{\mathcal{A}}_{L^1}(\mappa;\Omega_1)+\overline{\mathcal{A}}_{L^1}(\mappa;\Omega_2)$. In the first example, denoting by $B_\radius$ the disk of $\R^2$ centered at $0$ and of radius $\radius$,  they consider the symmetric triple point map $u_T:B_\radius\to\{\alpha,\beta,\gamma\}\subset\R^2$, which sends three identical circular sectors of $B_\radius$ to the vertices $\alpha,\beta,\gamma$ of an equilateral triangle. In the second one, they show that the non-subaddivity arises even among Sobolev maps, like the vortex function $u_V:B_\radius\to\Suno\subset\R^2$ defined by $u(x)=\frac{x}{|x|}$, $x\neq 0$. For an explicit computation of the value of $\overline{\mathcal{A}}_{L^1}(u_T;\Omega)$ and $\overline{\mathcal{A}}_{L^1}(u_V;\Omega)$ we refer to \cite{BP,S} and \cite{BES} (see also \cite{BMS}), respectively. Moreover, for the analysis of the triple point map without symmetry assumptions, we refer to \cite{BEPS}. \\
Although the $L^1$-topology induces some useful properties in Calculus of Variations, the previous examples show that we cannot avoid non-locality issues. An alternative approach is to choose a different topology in the relaxation, stronger than the $L^1$-topology, to put on the space $BV(\Om;\R^2)$ in order to possibly get rid of non-local phenomena. Following \cite{BCS,BCS2,Mc2}, we choose the strict $BV$-convergence. We recall that for $u_k,u\in BV(\Om;\R^2)$, we say that $u_k\rightarrow u$ strictly $BV(\Om;\R^2)$ if $u_k\rightarrow u$ in $L^1(\Om;\R^2)$ and $|Du_k|(\Om)\to|Du|(\Om)$, where $|\mu|(\Om)$ stands for the total variation of a Radon measure $\mu$ on $\Om$.   So we are led to investigate for every $\mappa\in BV(\Om;\R^2)$
\begin{equation}\label{relaxed BV}
\overline{\mathcal{A}}_{BV}(\mappa;\Omega)
:=\inf\left\{\liminf_{k\rightarrow 
+\infty}\mathcal{A}(v_k;\Omega):(v_k)\subset C^1(\Omega;\R^2),\;v_k\rightarrow \mappa \mbox{ strictly }BV(\Om;\R^2)\right\}.
\end{equation}
Another important functional, highly related to the area, is the total variation of the Jacobian determinant, which is classically defined for every $v\in C^1(\Om;\R^2)$ by $\totvarjac(v;\Om):=\int_{\Om}|Jv|dx$ and extended to every $u\in BV(\Om;\R^2)$ by relaxation 
$$\overline{\totvarjac}
_{BV}(u;\Om):=\inf\left\{\liminf_{k\rightarrow+\infty}
\totvarjac
(\mappabuona_k;\Om):
(\mappabuona_k)\subset C^1(\Om;\R^2),\; 
\mappabuona_k\rightarrow \mappa \text{ strictly }BV(\Om;\R^2)\right\}.$$
We refer to \cite{FFM, Mc1, Pa, DP, BePaTe:15,BePaTe:16} for weak notion of area, total variation of the Jacobian determinant and related energies via relaxation with other different topologies. Moreover, we address to \cite{GMS1,GMS2,GMS3} for an approach to the study of graph of singular maps via Cartesian Currents.\\

In the present paper, we generalize at once the results in \cite{BCS} about vortex-type maps and in \cite{BCS2} about piecewise constant $0$-homogeneous maps, by considering general $0$-homogeneous maps in $BV(B_\radius;\R^2)$.
\begin{Definition}\label{hom map}
A map $u\in BV(B_\radius;\R^2)$ is $0$-homogeneous if it is of the form
\begin{align}\label{homog map def}
u(x)=\gamma\left(\frac{x}{|x|}\right)\quad\mbox{ a.e. } x\in B_\radius
\end{align}
for some $\gamma\in BV(\Suno;\R^2)$.
In this case, we say that $u$ is the $0$-homogeneous (or simply homogeneous) extension of $\gamma$ on $B_\radius$.
\end{Definition}
In order to ensure the consistency of Definition \ref{hom map}, we shall prove in Proposition \ref{homog extension is BV} that the homogeneous extension of a map $\gamma\in BV(\Suno;\R^2)$ belongs to $BV(B_\radius;\R^2)$. Notice that, according to Definition \ref{hom map}, the maps $u_V$ and $u_T$ are $0$-homogeneous.\\
The fundamental idea in our analysis is to define a notion of area enclosed by the image of $\gamma$, in such a way it is compatible with the strict convergence. Precisely, we consider (compare \cite{BCS2}) the relaxation
\begin{align}\label{Plateau_rel}
	\rilP(\gamma)
:=\inf\left\{\liminf_{n\rightarrow +\infty}P(\varphi_n): 
\varphi_n\in \mathrm{Lip}(\Suno;\R^2), \varphi_n\rightarrow\gamma\text{ strictly }BV(\Suno;\R^2)
\right\}
\end{align}
of the (singular) Plateau problem 
\begin{align}\label{Plateau}
P(\varphi)=\inf\left\{\int_{B_1}|Jv|~dx: \,v\in \mathrm{Lip}(B_1;\R^2), v_{|\partial B_1}=
\homogeneousmap
\right\}
\end{align}
associated to any $\varphi\in$ Lip$(\Suno;\R^2)$.
Our main result is the following:
\begin{Theorem}\label{relaxed area thm}
Let $\gamma\in BV(\Suno;\R^2)$ and $u$ as in Definition \ref{hom map}. Then
\begin{align}\label{area BV}
\overline{\mathcal{A}}_{BV}(u;B_\radius)=\int_{B_\radius}\sqrt{1+|\nabla u|^2}dx+|D^su|(B_\radius)+\rilP(\gamma),
\end{align}
where $D^su$ is the singular part of the measure $Du$.
\end{Theorem}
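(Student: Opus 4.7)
The equality in \eqref{area BV} is proved by two matching inequalities.

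For the lower bound I take an arbitrary sequence $(v_k)\subset C^1(B_\radius;\R^2)$ with $v_k\to u$ strictly in $BV$, fix $\rho\in(0,\radius)$, and exploit the elementary splitting
\[
\sqrt{1+|\nabla v_k|^2+(Jv_k)^2}\ \ge\ \mathbf{1}_{B_\radius\setminus\overline B_\rho}\sqrt{1+|\nabla v_k|^2}+\mathbf{1}_{B_\rho}|Jv_k|.
\]
The first summand is a convex integral with linear growth, whose $L^1$-lower semicontinuous envelope on $BV$ is $\int\sqrt{1+|\nabla u|^2}\,dx+|D^su|(\cdot)$; this takes care of the smooth-plus-singular part of \eqref{area BV} on $B_\radius\setminus\overline B_\rho$. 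For the second summand I select, by a Fubini/coarea argument on the measures $|Dv_k|$ combined with the strict hypothesis $|Dv_k|(B_\radius)\to|Du|(B_\radius)$, a favourable radius $\bar\rho\in(0,\rho)$ such that, up to a subsequence, the rescaled angular traces $\varphi_k(\sigma):=v_k(\bar\rho\sigma)$, $\sigma\in\Suno$, belong to $\mathrm{Lip}(\Suno;\R^2)$ and converge strictly in $BV(\Suno;\R^2)$ to $\gamma$. Once this is secured, the change of variable $y=x/\bar\rho$ and the definition of the singular Plateau functional \eqref{Plateau} yield
\[
\int_{B_{\bar\rho}}|Jv_k|\,dx\ =\ \int_{B_1}\bigl|J(v_k(\bar\rho\,\cdot))\bigr|\,dy\ \ge\ P(\varphi_k),
\]
and the very definition \eqref{Plateau_rel} gives $\liminf_kP(\varphi_k)\ge \rilP(\gamma)$. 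Summing the two liminfs and letting $\rho\to 0^+$ closes the lower bound.

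For the upper bound I construct a recovery sequence by gluing. Let $(\varphi_n)\subset\mathrm{Lip}(\Suno;\R^2)$ be admissible for \eqref{Plateau_rel} with $\varphi_n\to\gamma$ strictly and $P(\varphi_n)\to\rilP(\gamma)$; for each $n$ choose an almost-minimizer $w_n\in\mathrm{Lip}(B_1;\R^2)$ extending $\varphi_n$ with $\int_{B_1}|Jw_n|\,dy\le P(\varphi_n)+\tfrac{1}{n}$. Pick $\rho_n\to 0^+$ so small that $\rho_n\int_{B_1}|\nabla w_n|\,dy\to 0$, and set
\[
v_n(x)=\begin{cases} w_n(x/\rho_n) & |x|\le\rho_n,\\ \text{smooth interpolation} & \rho_n<|x|<2\rho_n,\\ (u*\phi_{\eps_n})(x) & |x|\ge 2\rho_n,\end{cases}
\]
for a suitable mollification scale $\eps_n\to 0$. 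The rescaling $y=x/\rho_n$ gives
\[
\mathcal{A}(v_n;B_{\rho_n})\ =\ \int_{B_1}\sqrt{\rho_n^4+\rho_n^2|\nabla w_n|^2+|Jw_n|^2}\,dy\ \longrightarrow\ \rilP(\gamma);
\]
the outer contribution tends to $\int_{B_\radius}\sqrt{1+|\nabla u|^2}\,dx+|D^su|(B_\radius)$ by standard continuity of mollification in the strict $BV$-topology, and the annular transition can be designed so that its area cost is $o(1)$. The same calibration gives $v_n\to u$ in $L^1$ and $|Dv_n|(B_\radius)\to|Du|(B_\radius)$, hence strict $BV$-convergence.

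The main obstacle is the extraction of the good radius $\bar\rho$ in the lower bound. Boundedness of $|Dv_k|(B_\radius)$ alone yields only $L^1$-convergence of the traces on $\partial B_r$ for almost every $r$, which is insufficient to trigger \eqref{Plateau_rel}; upgrading to strict $BV$-convergence of the rescaled traces requires a delicate diagonal procedure that exploits simultaneously the strict-$BV$ assumption $|Dv_k|(B_\radius)\to|Du|(B_\radius)$ and the feature---peculiar to $0$-homogeneous maps---that the angular total variation $|D^\tau u|(\partial B_r)$ equals $|D\gamma|(\Suno)$ for every $r$, so that one can pick a radius on which the angular total variations of $v_k$ converge to $|D\gamma|(\Suno)$.
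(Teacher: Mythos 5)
Your lower bound follows essentially the paper's strategy: split the area into an annular part handled by lower semicontinuity of the convex gradient integrand and an inner disk whose Jacobian mass is bounded from below via rescaled circular traces; the "delicate diagonal procedure" you flag is exactly the paper's Lemma~\ref{lem:inheritance circ}, cited from \cite{BCS2}, and your use of the \emph{definition} of $\rilP(\gamma)$ to get $\liminf_k P(\varphi_k)\ge \rilP(\gamma)$ is in fact slightly more elementary than the paper's appeal to Corollary~\ref{cor_continuity_P}. One small inaccuracy: you attribute the radius-selection step to the $0$-homogeneity of $u$, but the inheritance lemma is stated and proved for general $BV$ maps; homogeneity merely makes the slice trace equal to $\gamma$ itself.

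The upper bound is where your route genuinely diverges, and it has two real gaps. First, the transition annulus $\rho_n<|x|<2\rho_n$: the inner boundary datum is $\varphi_n$, the outer one is (roughly) the restriction of $u*\phi_{\eps_n}$ to a circle; both converge \emph{strictly} to $\gamma$, but if $\gamma$ jumps they need not converge uniformly, and in fact they may converge after \emph{different} reparametrizations to $\widetilde\gamma$. Then $\|\varphi_n-(u*\phi_{\eps_n})|_{\partial B_{2\rho_n}}\|_\infty$ can stay bounded away from zero, and the Jacobian cost of the radial interpolation, which is controlled by $\int|\psi_n-\varphi_n|\,(|\dot\varphi_n|+|\dot\psi_n|)\,d\theta$, need not be $o(1)$. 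Making it $o(1)$ essentially requires choosing $\varphi_n$ equal to the circular trace of the mollification, at which point $\varphi_n\to\gamma$ strictly and $P(\varphi_n)\to\rilP(\gamma)$ are themselves the nontrivial content of the paper's Lemma~\ref{strict implies uniform 3} and Corollary~\ref{cor_continuity_P}. Second, on $\{|x|\ge 2\rho_n\}$ your competitor is the two-dimensional convolution $u*\phi_{\eps_n}$, which is \emph{not} $0$-homogeneous, so its Jacobian does not vanish; you compute only the gradient part $\int\sqrt{1+|\nabla(u*\phi_{\eps_n})|^2}$ and silently drop $\int|J(u*\phi_{\eps_n})|$ from the area, without showing this term vanishes as $\eps_n\to0$. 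The paper sidesteps both issues at once: it takes $\varphi_k$ a \emph{one-dimensional} mollification of $\gamma$ on $\Suno$, forms the $0$-homogeneous $u_k(x)=\varphi_k(x/|x|)$ (so $Ju_k\equiv 0$ and there is no transition annulus at all), uses the already-proved Corollary~\ref{area and Plateau} to write $\overline{\mathcal A}_{BV}(u_k;B_\radius)=\int\sqrt{1+|\nabla u_k|^2}+P(\varphi_k)$, passes to the limit by a Goffman--Serrin slicing argument for $\int\sqrt{1+|\nabla u_k|^2}$ and by Corollary~\ref{cor_continuity_P} for $P(\varphi_k)$, and finally invokes lower semicontinuity of $\overline{\mathcal A}_{BV}$ along $u_k\to u$ strictly. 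You should adopt this lower-semicontinuity route, or else supply a proof that the annular Jacobian cost and the outer-region Jacobian are both negligible; as written, neither is established.
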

A crucial ingredient in the proof of Theorem \ref{relaxed area thm} will be the computation of $\overline{{\totvarjac}}_{BV}(u,B_\radius)$ in terms of the relaxed Plateau problem \eqref{Plateau_rel} (Theorem \ref{Plateau for BV}).
In Lemma \ref{lem_plateaurel} we shall see that $\rilP(\gamma)$ can be characterized as the area enclosed by the "completed map" $\widetilde\gamma$ which "fill the jumps" of $\gamma$ by means of segments, in other words $\rilP(\gamma)=P(\widetilde\gamma)$. A precise construction of $\widetilde\gamma$ will be given in Lemma \ref{strict implies uniform 3}.\\
We point out that $\overline{\mathcal{A}}_{BV}(u;\cdot)$ is a measure for $u$ as in \eqref{homog map def}. However, to the best of our knowledge, it is not known if $\overline{\mathcal{A}}_{BV}(u;\cdot)$ is subadditive for a generic map $u\in BV(B_\radius;\R^2)$. Moreover, a complete characterization of the set Dom$(\overline{\mathcal{A}}_{BV}(\cdot\,;B_\radius)):=\{u\in BV(B_\radius;\R^2): \overline{\mathcal{A}}_{BV}(u;B_\radius)<+\infty\}$ is not yet available: from \cite{BCS2}, we only know that Dom$(\overline{\mathcal{A}}_{BV}(\cdot\,;B_\radius))\subsetneq$ Dom$(\overline{\mathcal{A}}_{L^1}(\cdot\,;B_\radius))\subsetneq BV(B_\radius;\R^2)$.

\medskip

\section{Preliminaries}\label{prelim}

Let $\Om\subset\R^2$ be an open bounded set. For any $u\in BV(\Omega;\R^2)$, we recall that
the distributional derivative $Du$ is 
a finite Radon measure valued in $\R^{2\times2}$. Denoting by $\mathscr{L}^2$ the Lebesgue measure of $\R^2$, by the Lebesgue decomposition theorem we have $Du=\nabla u\mathscr{L}^2+D^s u$, where $\nabla u\in L^1(\Omega;\R^{2\times2})$ and  $D^s u\perp\mathscr{L}^2$.
The symbol $|Du|(\Omega)$ stands for the total variation of $Du$ 
(see \cite[Definition 3.4, pag. 119]{AFP})  
with $\vert \cdot \vert$ the Frobenius norm.
 
\begin{Definition}[\textbf{Strict convergence}]
	Let $\mappa\in BV(\Omega;\R^2)$ and $(\mappa_k)
\subset BV(\Omega;\R^2)$. 
We say that $(\mappa_k)$ converges
 to $\mappa$ strictly $BV$, 
if
	$$
	\mappa_k\xrightarrow{L^1}\mappa\qquad\mbox{and}\qquad|D\mappa_k|
(\Omega)\rightarrow|D\mappa|(\Omega).
	$$
\end{Definition}
Now, let $B_\radius$ be the disk of $\R^2$ centered at the origin of radius $\radius>0$. If $u\in BV(B_\radius;\R^2)$, by Lebesgue differentiation theorem and Fubini theorem, for almost every $r<\radius$ the restriction $u\mres\partial B_r$ is well defined and independent of the representative of $u$, since it coincides with the trace of $u$ on $\mathcal{H}^1$-almost every point of $\partial B_r$.
In particular,  for almost every $r<\radius$, we can define the total variation of $u\mres\partial B_r$ as 
\begin{equation}\label{var on circ}
|D(u\,\mres\partial B_r)|(\partial B_r):=\sup\left\{\int_0^{2\pi}\!\!\bar{u}(r,\theta)\cdot f'(\theta)d\theta; f\in C^1([0,2\pi];\overline B_1(0)),f(0)=f(2\pi),f'(0)=f'(2\pi)\!\right\}
\end{equation}
which turns out to be finite (see Lemma \ref{lem:inheritance circ}), giving that $u\mres\partial B_r\in BV(\partial B_r;\R^2)$, for almost every $r<\radius$. Here $\bar u(r,\theta):=u(r\cos\theta,r\sin\theta)$ for every $r\in(0,\radius],\theta\in[0,2\pi]$.\\

We want to relate this quantity with the notion of \textit{tangential total variation}.
\begin{Definition}[{\bf Tangential total variation in an annulus}]\label{def:annulus}
For $x=(x_1,x_2)\in\R^2\setminus\{(0,0)\}$, set 
$\tau(x)=\frac{1}{|x|}(-x_2,x_1)$. Let $0< \eps< \radius$ and 
$A_{\eps,\radius}:=B_\radius\setminus \overline{B_\eps}$ be an 
annulus around $0$. We define the tangential total variation of $u\in BV(A_{\eps,\radius};\R^2)$ 
as the total variation of the Radon measure $D_\tau u:=Du\tau$, namely
\begin{equation}\label{tangential var ann}
|D_\tau u|(A_{\eps,\radius})=|Du\tau|(A_{\eps,\radius})=\sup\Big\{-\int_{A_{\eps,\radius}}u\cdot(\nabla g\tau) ~dx: g\in C^1_c(A_{\eps,\radius};\overline B_1(0))
\Big\}.
\end{equation}
\end{Definition}
\noindent
The last equality in \eqref{tangential var ann} is justified since $\tau\in C^\infty(A_{\eps,\radius};\R^2)$ satisfies $\text{\rm div}\tau=0$ everywhere, so for any $g=(g^1,g^2)\in C^1_c(A_{\eps,\radius};\R^2)$ we have
\begin{equation}\nonumber
\begin{aligned}
 -\int_{A_{\eps,\radius}}u\cdot(\nabla g\tau) ~dx
&=-\int_{A_{\eps,\radius}}u^1\nabla g^1\cdot\tau ~dx-\int_{A_{\eps,\radius}}u^2\nabla g^2\cdot\tau ~dx
\\
&=-\int_{A_{\eps,\radius}}u^1\mathrm{div}( g^1\tau) ~dx-\int_{A_{\eps,\radius}}u^2\mathrm{div}( g^2\tau )~dx
\\
&=\int_{A_{\eps,\radius}}g^1\tau\cdot dDu^1+\int_{A_{\eps,\radius}}g^2\tau\cdot dDu^2
=\int_{A_{\eps,\radius}}g\cdot (dDu)\tau=\langle Du\tau,g\rangle.
\end{aligned}
\end{equation}
This computation shows that $|D_\tau u|(A_{\eps,\radius})\leq|Du|(A_{\eps,\radius})$, since $|\tau|\leq1$, and also that \eqref{tangential var ann} is compatible with the case $u\in W^{1,1}(A_{\eps,\radius};\R^2)$, 
where simply $|D_\tau u|(A_{\eps,\radius})=\int_{A_{\eps,\radius}}|\nabla u\tau|~dx$. 
Moreover, $Du=\frac{Du}{|Du|}|Du|$ by polar decomposition, 
so that
$$
\langle Du\tau,g\rangle=\int_{A_{\eps,\radius}}g\cdot (dDu)\tau=\int_{A_{\eps,\radius}}g\cdot \left(\frac{Du}{|Du|}d|Du|\right)\tau=\int_{A_{\eps,\radius}}g\cdot \left(\frac{Du}{|Du|}\tau\right)d|Du|\quad\forall g\in C_c^1(A_{\eps,\radius};\R^2),
$$
giving that 
\begin{align}\label{polar dec}
D_\tau u=Du\tau=\frac{Du}{|Du|}\tau|Du|.
\end{align}
In \cite{BCS2}, the following slicing result for the strict convergence is proven.
\begin{Lemma}[\textbf{Inheritance of strict convergence to circumferences}]\label{lem:inheritance circ}
Let $u\in BV(B_\radius;\R^2)$. 
Suppose that $(v_k)\subset C^1(B_\radius;\R^2)$ is a sequence 
converging to $u$ strictly $BV(B_\radius;\R^2)$. 
Then, for almost every $r\in(0,l)$, there exists a subsequence $(v_{k_h})
\subset (v_k)$, depending on $r$, such that
\begin{equation}\label{thesis}
v_{k_h}\mres \partial B_r\rightarrow u\mres \partial B_r\quad\mbox{strictly }BV(\partial B_r;\R^2).
\end{equation}
\end{Lemma}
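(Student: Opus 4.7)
The plan is to combine Reshetnyak's continuity theorem on annular subdomains with a slicing identity in polar coordinates. Since $|Du|$ is a finite Radon measure, $|Du|(\partial B_\eps)=0$ for all but countably many $\eps\in(0,\radius)$. For each such $\eps$, lower semicontinuity of the total variation on the open sets $B_\eps$ and $A_{\eps,\radius}$, combined with the strict $BV$-convergence on $B_\radius$ and the identity $|Dv_k|(B_\radius)=|Dv_k|(A_{\eps,\radius})+|Dv_k|(B_\eps)$ (which holds since $|Dv_k|$ is absolutely continuous), forces $|Dv_k|(A_{\eps,\radius})\to |Du|(A_{\eps,\radius})$, i.e.\ $v_k\to u$ strictly in $BV(A_{\eps,\radius};\R^2)$. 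On the closed annulus the field $\tau$ is continuous, so $f(x,P):=|P\tau(x)|$ is a continuous, $1$-homogeneous (in $P$) test function bounded on the Frobenius unit sphere; applying Reshetnyak's continuity theorem to the matrix-valued measures $Dv_k\to Du$ then yields
$$
|D_\tau v_k|(A_{\eps,\radius})\longrightarrow |D_\tau u|(A_{\eps,\radius}).
$$

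Next I would establish a polar slicing identity: for every $w\in BV(B_\radius;\R^2)$,
$$
|D_\tau w|(A_{\eps,\radius})=\int_\eps^\radius|D(w\mres\partial B_r)|(\partial B_r)\,dr.
$$
Using the polar diffeomorphism $\Phi(r,\theta)=(r\cos\theta,r\sin\theta)$, which satisfies $\partial_\theta\Phi=r\,\tau\circ\Phi$ and $|\det D\Phi|=r$, a direct change-of-variables gives $|D_\tau w|(A_{\eps,\radius})=|D_\theta\bar w|((\eps,\radius)\times\Suno)$ for $\bar w:=w\circ\Phi$, and the standard $BV$ slicing theorem in the direction $\theta$ identifies the right-hand side with the iterated integral above. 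Setting $T_k(r):=|D(v_k\mres\partial B_r)|(\partial B_r)$ and $G(r):=|D(u\mres\partial B_r)|(\partial B_r)$, the two previous steps combine to give
$$
\int_\eps^\radius T_k(r)\,dr\longrightarrow\int_\eps^\radius G(r)\,dr.
$$

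Finally, Fubini applied to $\|v_k-u\|_{L^1(B_\radius)}\to 0$ produces a subsequence along which $v_k\mres\partial B_r\to u\mres\partial B_r$ in $L^1(\partial B_r;\R^2)$ for a.e.\ $r$; on those $r$, the lower semicontinuity of the total variation on $\partial B_r$ gives $\liminf_k T_k(r)\geq G(r)$. Fatou's lemma, together with the integrated convergence above, then forces $\liminf_k T_k(r)=G(r)$ for a.e.\ $r\in(\eps,\radius)$, and letting $\eps\downarrow 0$ along a countable sequence extends the equality to a.e.\ $r\in(0,\radius)$. For each such $r$, a diagonal extraction produces a subsequence $(v_{k_h})$ with $T_{k_h}(r)\to G(r)$ and $v_{k_h}\mres\partial B_r\to u\mres\partial B_r$ in $L^1(\partial B_r;\R^2)$, which is precisely the claimed strict $BV(\partial B_r;\R^2)$ convergence.

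The main obstacle will be the polar slicing identity for $BV$ maps: the equality $|D_\tau w|(A_{\eps,\radius})=|D_\theta\bar w|((\eps,\radius)\times\Suno)$ is a routine change-of-variables calculation for smooth $w$, with the Jacobian factor $r$ cancelling between $\partial_\theta\bar w=r(\partial_\tau w)\circ\Phi$ and $dx=r\,dr\,d\theta$, but extending it to general $w\in BV$ requires a careful strict approximation by smooth maps and a matching between the Cartesian tangential measure $D_\tau w$ and the polar partial-derivative measure $D_\theta\bar w$.
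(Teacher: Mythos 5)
Your argument is correct, and it reconstructs essentially the same proof strategy as the cited source \cite{BCS2}: the coarea--type identity you call the ``polar slicing identity'' is precisely the paper's Lemma~\ref{Fubini formula}, which the paper explicitly presents as a byproduct of the proof of Lemma~\ref{lem:inheritance circ}, and the transfer of strict convergence from $|Dv_k|$ to the tangential variation via Reshetnyak's continuity theorem (applied with the bounded continuous integrand $f(x,P)=|P\tau(x)|$ on the closed annulus, after eliminating the countable set of radii with $|Du|(\partial B_\eps)>0$) is the natural mechanism here --- indeed the paper's source even carries a commented-out statement of Reshetnyak's theorem. The Fatou step is sound: from $\liminf_k T_k\geq G$ a.e.\ and $\int_\eps^\radius T_k\,dr\to\int_\eps^\radius G\,dr$ one gets $\int_\eps^\radius(\liminf_k T_k - G)\,dr\leq 0$ by Fatou, hence $\liminf_k T_k=G$ a.e., and the final diagonal extraction along the $L^1$-slicing subsequence closes the argument. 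The only place requiring care, as you correctly flag, is the $BV$ slicing identity itself, which for general $w\in BV$ does not reduce to a pointwise change of variables; one should either carry it out on the polar rectangle $(\eps,\radius)\times(0,2\pi)$ using the standard one-dimensional slicing theorem for $BV$ in the $\theta$-direction and observe that the Jacobian factor $r$ in $\partial_\theta\bar w = r\,(\nabla w\,\tau)\circ\Phi$ cancels against the factor $r$ in $dx=r\,dr\,d\theta$ (and then pass to the $BV$ case by strict approximation with smooth maps together with lower semicontinuity on both sides), or simply invoke the paper's Lemma~\ref{Fubini formula} directly, since it is stated as an independent result.
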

In the proof of Lemma \ref{lem:inheritance circ} a useful Coarea-type formula is provided:
\begin{Lemma}\label{Fubini formula}
Let $u\in BV(B_\radius;\R^2)$. Then
 \begin{align}\label{Fub form}
|D_\tau u|(A_{\eps,\radius})=\int_\eps^\radius|D(u\mres\partial B_r)|(\partial B_r)~dr.
\end{align}
\end{Lemma}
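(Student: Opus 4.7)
The plan is to pass to polar coordinates and reduce the claim to the $1$D Fubini--slicing theorem for $BV$ functions applied in the angular direction. Set $\Phi(r,\theta):=(r\cos\theta,r\sin\theta)$, which is a bi-Lipschitz diffeomorphism from the rectangle $R:=(\eps,\radius)\times(0,2\pi)$ onto $A_{\eps,\radius}$ minus the negligible radial slit $\{(s,0):\eps<s<\radius\}$, and write $\bar u:=u\circ\Phi\in BV(R;\R^2)$.

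The first step is to prove
$$
|D_\tau u|(A_{\eps,\radius})=|D_\theta\bar u|(R).
$$
For $u\in C^1$ this is immediate from the chain rule $\partial_\theta\bar u=r\,(\nabla u\,\tau)\circ\Phi$ and the area element $dx=r\,dr\,d\theta$: the factor $r$ in the Jacobian cancels exactly the $1/r$ implicit in $\tau=\frac{1}{|x|}(-x_2,x_1)$ via $\partial_\theta\Phi=r\,\tau\circ\Phi$. For a general $u\in BV$ I would deduce the identity by comparing dual characterizations: every test field $g\in C^1_c(A_{\eps,\radius};\overline B_1(0))$ in \eqref{tangential var ann} lifts through $\Phi$ to an admissible test function for
$$
|D_\theta\bar u|(R)=\sup\left\{-\int_R\bar u\cdot\partial_\theta g\,dr\,d\theta:\ g\in C^1_c(R;\overline B_1(0))\right\},
$$
and vice versa; the two suprema coincide thanks to the Jacobian cancellation above.

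Next, I would invoke the classical slicing theorem for $BV$ maps (see e.g.\ \cite[Theorem~3.103]{AFP}) on the rectangle $R$ in the $\theta$-direction, obtaining
$$
|D_\theta\bar u|(R)=\int_\eps^\radius|D_\theta\bar u(r,\cdot)|\bigl((0,2\pi)\bigr)\,dr,
$$
together with the fact that the slice $\theta\mapsto\bar u(r,\theta)$ belongs to $BV((0,2\pi);\R^2)$ for a.e.\ $r\in(\eps,\radius)$. Finally, I would identify the $1$D slice variation with the definition \eqref{var on circ} of $|D(u\mres\partial B_r)|(\partial B_r)$: for a.e.\ $r$ the angle $\theta=0$ is a Lebesgue point of $\theta\mapsto\bar u(r,\theta)$, so no jump is hidden by cutting the circle there, and the supremum in \eqref{var on circ} over periodic $C^1$ test functions on $[0,2\pi]$ agrees with the supremum over $C^1_c((0,2\pi);\overline B_1(0))$ that computes the $1$D variation of the slice. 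Combining the three identities yields \eqref{Fub form}.

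The main obstacle is the first step: one must transport the supremum characterization of $|D_\tau u|$ across the polar diffeomorphism at the level of distributional derivatives, keeping careful track of the Jacobian factors so that no extra weight appears. A secondary delicacy, handled via the Fubini selection of ``good'' radii, is the passage from the $1$D slice variation on the open interval $(0,2\pi)$ to the circular variation defined with periodic boundary conditions in \eqref{var on circ}.
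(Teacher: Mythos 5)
The paper does not give its own proof of Lemma~\ref{Fubini formula} --- it is quoted from \cite{BCS2}, where it appears inside the proof of Lemma~\ref{lem:inheritance circ} --- so there is no in-text argument to compare against; your polar-coordinate slicing strategy is the natural one. There is, however, a genuine gap in your handling of the branch cut. You parametrize on the \emph{open} rectangle $R=(\eps,\radius)\times(0,2\pi)$ and dismiss the removed radial slit $\{(s,0):\eps<s<\radius\}$ as negligible. It is Lebesgue negligible, but it need not be $|D_\tau u|$-negligible: if $u$ jumps across the positive $x_1$-axis, the measure $D_\tau u$ charges the slit. Concretely, for $u=(\mathbf{1}_{\{x_2>0\}},0)$ one has $|D_\tau u|(A_{\eps,\radius})=2(\radius-\eps)$, while $|D_\theta\bar u|(R)=\radius-\eps$, since the open interval $(0,2\pi)$ only detects the jump at $\theta=\pi$. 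This simultaneously breaks your first identity $|D_\tau u|(A_{\eps,\radius})=|D_\theta\bar u|(R)$ --- a test field $g\in C^1_c(A_{\eps,\radius})$ whose support crosses the slit does \emph{not} pull back to a $C^1_c(R)$ field, because the pull-back is $2\pi$-periodic but not compactly supported in $(0,2\pi)$ --- and your assertion that ``for a.e.\ $r$ the angle $\theta=0$ is a Lebesgue point of $\theta\mapsto\bar u(r,\theta)$'': in the example every slice has an atom at $\theta=0$. Selecting good radii by Fubini cannot repair this; the cut angle itself must be chosen well.

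The repair is not hard but must be made explicit. Since $|D_\tau u|$ is a finite measure on the annulus, at most countably many rays $\{\theta=\theta_0\}$ can carry positive $|D_\tau u|$-mass, so choose $\theta_0$ with $|D_\tau u|(\{\theta=\theta_0\})=0$ and use the slit rectangle $(\eps,\radius)\times(\theta_0,\theta_0+2\pi)$; one must then still verify, by applying the slicing theorem in a neighbourhood of the ray $\{\theta=\theta_0\}$, that for a.e.\ $r$ the angular slice has no atom at $\theta_0$, so that the $1$D variation on the open arc agrees with the circular variation in \eqref{var on circ}. Cleaner still is to run the argument directly on the cylinder $(\eps,\radius)\times\Suno$, where the slicing theorem applies with no cut at all.
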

This formula allows us to define a notion of tangential total variation for $u\in BV(B_\radius;\R^2)$ on the whole $B_\radius$, since the right hand side of \eqref{Fub form} is monotone non-increasing and equibounded w.r.t. $\eps$.
\begin{Definition}[{\bf Tangential total variation in $B_\radius$}]\label{def:tang tot var}
Let $\tau$ and $A_{\eps,\radius}$ as in Definition \ref{def:annulus}. We define the tangential total variation of $u\in BV(B_\radius;\R^2)$ as 
\begin{equation}\label{tangential var}
|D_\tau u|(B_\radius):=\lim_{\eps\to0^+}|D_\tau u|(A_{\eps,\radius})=\int_0^\radius|D(u\mres\partial B_r)|(\partial B_r)~dr.
\end{equation}
\end{Definition}
\noindent

\begin{Proposition}\label{homog extension is BV}
Let $\gamma\in BV(\Suno;\R^2)$ and $u$ be defined as in \eqref{homog map def}. Then $u\in BV(B_\radius;\R^2)$ and
\begin{align}\label{var u}
|Du|(B_\radius)=\radius|\dot{\gamma}|(\Suno).
\end{align}
 Moreover, 
\begin{align}\label{var split}
\int_{B_\radius}|\nabla u|dx=\radius\int_{\Suno}|\dot\gamma^a|dy,\qquad|D^su|(B_\radius)=\radius|\dot\gamma^s|(\Suno).
\end{align}
\end{Proposition}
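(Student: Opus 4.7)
The plan is to establish the total variation identity \eqref{var u} by two independent bounds, and then to derive the Lebesgue-decomposition identity \eqref{var split} by a chain-rule computation of the approximate gradient of $u$.

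\textbf{Upper bound.} I would mollify $\gamma$ on $\Suno$ to obtain a sequence $\gamma_n \in C^1(\Suno;\R^2)$ converging to $\gamma$ strictly $BV(\Suno;\R^2)$, and set $u_n(x) := \gamma_n(x/|x|)$. Then $u_n$ is smooth on $B_\radius \setminus \{0\}$, and in polar coordinates $\partial_r u_n \equiv 0$ while the chain rule gives $\nabla u_n(x) = r^{-1}\dot\gamma_n(\theta) \otimes \tau(x)$; since $r^{-1}$ is integrable in two dimensions, $u_n \in W^{1,1}(B_\radius;\R^2)$ with
\[|Du_n|(B_\radius) = \int_0^\radius\!\!\int_0^{2\pi} r^{-1}|\dot\gamma_n(\theta)|\, r\, d\theta\, dr = \radius |\dot\gamma_n|(\Suno).\]
Fubini also yields $u_n \to u$ in $L^1(B_\radius;\R^2)$, so lower semicontinuity of total variation under $L^1$-convergence gives $|Du|(B_\radius) \leq \radius|\dot\gamma|(\Suno)$; in particular $u \in BV(B_\radius;\R^2)$.

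\textbf{Lower bound.} Now that $u \in BV(B_\radius;\R^2)$, I would invoke Lemma \ref{Fubini formula}. Parametrizing $\partial B_r$ by $\theta$ identifies $u\mres\partial B_r$ with $\gamma$ itself, and the defining supremum \eqref{var on circ} for $|D(u\mres\partial B_r)|(\partial B_r)$ then coincides with the defining supremum for $|\dot\gamma|(\Suno)$, in particular independently of $r$. Hence Lemma \ref{Fubini formula} gives $|D_\tau u|(A_{\eps,\radius}) = (\radius-\eps)|\dot\gamma|(\Suno)$; combining this with the pointwise inequality $|D_\tau u| \leq |Du|$ recorded just after \eqref{tangential var ann} and letting $\eps \to 0^+$ yields $\radius|\dot\gamma|(\Suno) \leq |Du|(B_\radius)$. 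Together with the upper bound, this proves \eqref{var u}.

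\textbf{Splitting.} At every $y_0 \in \Suno$ where $\gamma$ is approximately differentiable (which is $\mathcal H^1$-a.e., as $\gamma \in BV$), the chain rule applied to $u(x) = \gamma(x/|x|)$ shows that $u$ is approximately differentiable at every $x_0 = ry_0 \in B_\radius$, with $\nabla u(x_0) = r^{-1}\dot\gamma^a(y_0) \otimes \tau(x_0)$ and hence $|\nabla u(x_0)| = r^{-1}|\dot\gamma^a(y_0)|$. A polar integration then gives the first identity of \eqref{var split}. The second follows by subtraction: since $D^s u \perp \mathscr L^2$ and $D^s \gamma \perp \mathcal H^1$ one has $|Du|(B_\radius) = \int_{B_\radius}|\nabla u|\,dx + |D^s u|(B_\radius)$ and $|\dot\gamma|(\Suno) = \int_{\Suno}|\dot\gamma^a|\,dy + |\dot\gamma^s|(\Suno)$, and comparing with \eqref{var u} forces $|D^s u|(B_\radius) = \radius |\dot\gamma^s|(\Suno)$.

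The most delicate step I expect is the chain-rule argument for $\nabla u$ in the splitting part: one must carefully check that composition with the smooth diffeomorphism $x \mapsto x/|x|$ of $B_\radius \setminus \{0\}$ onto $\Suno$ preserves approximate differentiability of $\gamma$ at $\mathcal H^1$-a.e. point, with the appropriate $1/r$ Jacobian weight. Should this transfer cause difficulty, an alternative route is to use the polar decomposition \eqref{polar dec} to read off $\nabla u$ from $D_\tau u$ by the slicing identity of Lemma \ref{Fubini formula}, and then Lebesgue-decompose on $\Suno$ to obtain the same splitting.
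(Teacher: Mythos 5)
Your proposal is correct, but it reaches \eqref{var u} by a genuinely different route than the paper. The paper's proof shows directly that the derivative of $u$ has no radial component: it tests $Du\cdot\nu$ (where $\nu(x)=x/|x|$) against $\psi\in C^1_c(A_{\eps,\radius})$, observes that $\mathrm{div}\,\nu=1/|x|$ cancels the polar Jacobian $\rho$ and that the radial derivative of $\bar\psi$ integrates to zero because $\bar u$ is $\rho$-independent, and concludes that $Du=(Du\tau)\otimes\tau$; combined with $|D_\tau u|(A_{\eps,\radius})\leq|Du|(A_{\eps,\radius})$ and Lemma \ref{Fubini formula} this gives equality at once. You instead prove the two inequalities separately: the upper bound by mollifying $\gamma$ to a $C^1$ sequence $\gamma_n\to\gamma$ strictly, computing $|Du_n|(B_\radius)=\radius|\dot\gamma_n|(\Suno)$ exactly for $u_n(x)=\gamma_n(x/|x|)$, and invoking $L^1$-lower semicontinuity of total variation; and the lower bound by the subunitarity $|D_\tau u|\leq|Du|$ recorded after \eqref{tangential var ann}, combined with the slicing formula of Lemma \ref{Fubini formula} and $\eps\to0^+$. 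Both are valid. Your two-sided argument has a tangible advantage: the upper-bound step explicitly establishes $u\in BV(B_\radius;\R^2)$ before any appeal to Lemma \ref{Fubini formula} or to the polar decomposition of $Du$, whereas the paper's proof starts manipulating $Du$ as a Radon measure and leaves the membership $u\in BV$ somewhat implicit (relying on boundedness of $|Du|(A_{\eps,\radius})$ and removability of the point singularity). For the splitting \eqref{var split}, both you and the paper reduce to the polar-coordinate formula for $\nabla u$ (the paper's \eqref{nabla u}) and then obtain the singular part by subtraction; you rightly flag the chain-rule transfer of approximate differentiability from $\gamma$ to $u$ as the delicate point — the paper likewise asserts \eqref{nabla u} without justifying that transfer — and the alternative you sketch, reading off $\nabla u$ from the polar decomposition \eqref{polar dec} together with the slicing identity, would close that gap cleanly.
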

\begin{proof}
Since $\bar{u}$ does not depend on $\rho$, by \eqref{var on circ}, we have $|D(u\mres\partial B_r)|(\partial B_r)=|\dot\gamma|(\Suno)$. So, thanks to \eqref{Fub form}, in order to prove \eqref{var u} it is enough to show that the variation of $u$ is purely tangential, namely $|Du|(B_\radius)=|D_\tau u|(B_\radius)$. To this purpose, set $\nu(x)=\frac{x}{|x|}$, $x\neq0$, and define the measure $D_\nu u:=Du\nu$ on the annulus $A_{\eps,\radius}$, i.e.
$$
\langle D_\nu u,g\rangle=\int_{A_{\eps,\radius}}u^1\mathrm{div}(g^1\nu)dx+\int_{A_{\eps,\radius}}u^2\mathrm{div}(g^2\nu)dx\quad\forall g\in C^1_c(A_{\eps,\radius};\R^2).
$$
By polar decomposition of vector valued Radon measure, for $i=1,2$ we have
$$Du^i=\frac{Du^i}{|Du^i|}|Du^i|=\left(\frac{Du^i}{|Du^i|}\cdot\tau\tau+\frac{Du^i}{|Du^i|}\cdot\nu\nu\right)|Du^i|=Du^i\cdot\tau\tau+Du^i\cdot\nu\nu.$$
Let us prove that $Du^i\cdot\nu=0$ on $A_{\eps,\radius}$, for $i=1,2$. Recall that $\bar\gamma(\theta):=\bar{u}(\rho,\theta)=\gamma(\cos\theta,\sin\theta)$. Let $\psi\in C^1_c(A_{\eps,\radius})$, then since div$\nu=\frac{1}{|x|}$ in $A_{\eps,\radius}$, we get
\begin{align*}
\langle Du^i\cdot\nu,\psi\rangle&=\int_{A_{\eps,\radius}}u^i\mathrm{div}(\psi\nu)dx=\int_{A_{\eps,\radius}}u^i\psi\mathrm{div}\nu dx+\int_{A_{\eps,\radius}}u^i\nabla\psi\cdot\nu dx\\
&=\int_\eps^\radius\int_0^{2\pi}\rho \bar{u}^i(\rho,\theta)\bar{\psi}(\rho,\theta)\frac{1}{\rho}d\rho d\theta+\int_\eps^\radius\int_0^{2\pi}\rho \bar{u}^i(\rho,\theta)\partial_\nu\bar{\psi}(\rho,\theta)d\rho d\theta\\
&=\int_\eps^\radius\int_0^{2\pi}\bar{\gamma}^i(\theta)\bar{\psi}(\theta)d\rho d\theta+\int_\eps^\radius \bar{\gamma}^i(\theta)\left[\int_0^{2\pi}\rho\partial_\nu\bar{\psi}(\rho,\theta)d\rho\right] d\theta\\
&=\int_\eps^\radius\int_0^{2\pi}\bar{\gamma}^i(\theta)\bar{\psi}(\rho,\theta)d\rho d\theta-\int_\eps^\radius\int_0^{2\pi}\bar{\gamma}^i(\theta)\bar{\psi}(\rho,\theta)d\rho d\theta=0.
\end{align*}
We infer that $Du=(Du\tau)\otimes\tau$ on $A_{\eps,\radius}$. Now, since $|(Du\tau)\otimes\tau|(A_{\eps,\radius})\leq|D_\tau u|(A_{\eps,\radius})\leq|Du|(A_{\eps,\radius})$, passing to the limit as $\eps\to0^+$, we conclude that $|Du|(B_\radius)=|D_\tau u|(B_\radius)=\radius|\dot\gamma|(\Suno)$.\\
Finally, in polar coordinates
\begin{align}\label{nabla u}
\nabla u(\rho\cos\theta,\rho\sin\theta)=\frac{\dot{\bar\gamma}^a(\theta)}{\rho}\qquad\mbox{a.e. } \rho\in(0,\radius],\theta\in[0,2\pi],
\end{align}
so that
$$
\int_{B_\radius}|\nabla u|dx=\int_0^\radius\int_0^{2\pi}\rho\frac{|\dot{\bar\gamma}^a(\theta)|}{\rho}d\theta d\rho=\radius\int_{\Suno}|\dot\gamma^a|dy
$$
and
$$
|D^su|(B_\radius)=|Du|(B_\radius)-\int_{B_\radius}|\nabla u|dx=\radius|\dot\gamma|(\Suno)-\radius\int_{\Suno}|\dot{\gamma}^a|dy=\radius|\dot\gamma^s|(\Suno).
$$
\end{proof}

\subsection{Further properties in dimension 1}
In \cite[Proposition 2.4]{BCS} the following is proved:

\begin{Lemma}\label{lem:strict_implies_uniform_1}
Let $(\gamma_k) \subset W^{1,1}([a,b];\R^2)$ be a sequence
converging strictly $BV([a,b];\R^2)$ to $\gamma \in 
W^{1,1}([a,b];\R^2)$. Then $\gamma_k\rightarrow \gamma$ uniformly in $[a,b]$. 
\end{Lemma}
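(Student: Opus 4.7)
The plan is to deduce uniform convergence from two ingredients: a uniform equicontinuity estimate coming from the strict convergence of total variations and the absolute continuity of $|D\gamma|$, together with pointwise convergence on a dense subset of $[a,b]$.

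First, I would observe that strict $BV$-convergence implies the weak-$*$ convergence of the positive measures $|D\gamma_k|$ to $|D\gamma|$ on $[a,b]$. Indeed, $L^1$-convergence yields $D\gamma_k\rightharpoonup^* D\gamma$ as vector-valued measures, so any weak-$*$ limit point $\mu$ of $|D\gamma_k|$ satisfies $\mu\geq|D\gamma|$ by Reshetnyak lower semicontinuity; since $\mu([a,b])=\lim_k|D\gamma_k|([a,b])=|D\gamma|([a,b])$ by strict convergence, one must have $\mu=|D\gamma|$. Because $\gamma\in W^{1,1}$, the measure $|D\gamma|$ is absolutely continuous with respect to Lebesgue measure, hence atomless.

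Next, I introduce the cumulative distributions $F_k(x):=|D\gamma_k|([a,x])$ and $F(x):=|D\gamma|([a,x])$. These are non-decreasing and uniformly bounded, and $F$ is continuous by the atomless property. Weak-$*$ convergence together with atomlessness of $|D\gamma|$ imply $F_k(x)\to F(x)$ for every $x\in [a,b]$, and a standard Dini-type argument (monotone functions converging pointwise to a continuous limit on a compact interval) upgrades this to uniform convergence $F_k\to F$. This is exactly the input I need for the equicontinuity estimate
\begin{align*}
|\gamma_k(y)-\gamma_k(x)|\leq |D\gamma_k|([x,y])=F_k(y)-F_k(x),
\end{align*}
whose right-hand side is, for $k$ large, uniformly small whenever $|x-y|$ is small, thanks to the uniform continuity of $F$.

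To conclude, I combine this equicontinuity with $L^1$-convergence. A subsequence $(\gamma_{k_j})$ converges a.e., so pointwise on a dense set $D\subset[a,b]$. For $x\in[a,b]$ and $\varepsilon>0$, I pick $y\in D$ so close to $x$ that $|\gamma(y)-\gamma(x)|<\varepsilon$ (by continuity of $\gamma$) and $F_{k_j}(y)-F_{k_j}(x)<\varepsilon$ for all large $j$; pointwise convergence at $y$ together with the triangle inequality then gives $|\gamma_{k_j}(x)-\gamma(x)|<3\varepsilon$ uniformly in $x$ for $j$ large. A Urysohn-type sub-subsequence argument promotes the conclusion to the full sequence. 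The main obstacle is the second step: turning the weak-$*$ convergence of measures into genuine uniform equicontinuity. The crucial observation is that the absolute continuity of the limit measure forbids concentration of mass, and monotonicity of $F_k$ together with continuity of the limit $F$ forces uniform (rather than just pointwise) closeness of the cumulative distributions.
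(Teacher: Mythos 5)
Your proof is correct, but the route is genuinely different from the paper's. The paper does not reproduce the argument for this statement (it cites \cite{BCS}), but its proof of the $BV$ generalization, Lemma~\ref{lem:strict_implies_uniform_2}, is a direct contradiction argument: assume $|\gamma_k(\tau_k)-\gamma(\tau_k)|>\delta$ at points $\tau_k\to\bar\tau$, choose a small interval $E\ni\bar\tau$ with $|\dot\gamma|(E)<\delta/4$ and $|\dot\gamma|(\partial E)=0$, use strict convergence on the \emph{single} interval $E$ to bound $\int_E|\dot\gamma_k|$, and propagate the discrepancy $\delta$ from $\tau_k$ to all points of $E$, contradicting $L^1$-convergence. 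Your argument is instead global and constructive: you first upgrade strict convergence to weak-$*$ convergence of the variation measures $|D\gamma_k|\rightharpoonup^*|D\gamma|$ via Reshetnyak, then use atomlessness of $|D\gamma|$ (this is where the $W^{1,1}$ hypothesis enters) to get pointwise, and via the P\'olya--Helly mechanism uniform, convergence of the cumulative variations $F_k\to F$, which delivers uniform equicontinuity; the conclusion then follows by an Arzel\`a--Ascoli-type argument. Your version isolates the key mechanism (no concentration of mass, hence equicontinuity) more transparently, at the cost of invoking Reshetnyak and portmanteau, where the paper's style of proof applies strict convergence only once, to one well-chosen interval.

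One minor imprecision in the final step: you pick, for each $x$, a nearby $y=y(x)\in D$ and invoke the pointwise convergence $\gamma_{k_j}(y)\to\gamma(y)$ ``for $j$ large,'' but that threshold in $j$ depends on $y$ and hence on $x$, so the phrase ``uniformly in $x$ for $j$ large'' does not follow as literally written. The fix is standard and stays entirely inside your framework: uniform continuity of $F$ together with the uniform convergence $F_{k_j}\to F$ give a single $\delta>0$, independent of $x$, such that $|F_{k_j}(y)-F_{k_j}(x)|<\varepsilon$ whenever $|x-y|<\delta$ and $j$ is large; then cover $[a,b]$ by finitely many intervals of radius $\delta$ centered at points of $D$, and take one $J$ that works simultaneously at those finitely many centers. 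With this finite-cover step in place, the Urysohn sub-subsequence argument you mention does promote the statement to the full sequence.
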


The same result holds also in case $\gamma\in BV([a,b];\R^2)$, but only on those compact subsets of $[a,b]$ which do not intersect the jump set $J_\gamma$. 

\begin{Lemma}\label{lem:strict_implies_uniform_2}
Let $(\gamma_k) \subset W^{1,1}([a,b];\R^2)$ be a sequence
converging strictly $BV([a,b];\R^2)$ to $\gamma \in 
BV([a,b];\R^2)$. Then, for every compact subset $K\subset[a,b]\setminus J_\gamma$, we have that 
\begin{align}\label{claim}
\gamma_k\rightarrow\gamma\quad\mbox{ uniformly in }K\quad\mbox{ as }k\rightarrow+\infty.
\end{align}
\end{Lemma}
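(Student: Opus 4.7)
The plan is to argue by contradiction. If the conclusion fails, there exist $\delta>0$, points $x_k\in K$, and a subsequence (not relabeled) with $|\gamma_k(x_k)-\gamma(x_k)|>\delta$. By compactness of $K$ I can assume $x_k\to x_\infty\in K$. Since $K\subset[a,b]\setminus J_\gamma$, the point $x_\infty$ is not a jump point, so the good representative of $\gamma$ is continuous at $x_\infty$ and $\gamma(x_k)\to\gamma(x_\infty)$. The task therefore reduces to showing $\gamma_k(x_k)\to\gamma(x_\infty)$, contradicting $|\gamma_k(x_k)-\gamma(x_\infty)|>\delta/2$ (eventually).

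The key tool I will exploit is that strict $BV$ convergence upgrades the distributional convergence $D\gamma_k\stackrel{*}{\rightharpoonup}D\gamma$ to weak-$*$ convergence of the total variations $|D\gamma_k|\stackrel{*}{\rightharpoonup}|D\gamma|$ on $[a,b]$ (this follows from Reshetnyak-type lower semicontinuity combined with $|D\gamma_k|([a,b])\to|D\gamma|([a,b])$). Fix $\eps>0$. Because $x_\infty\notin J_\gamma$ the measure $|D\gamma|$ has no atom at $x_\infty$, and because $J_\gamma$ is at most countable I can choose $\alpha<x_\infty<\beta$ with $\alpha,\beta\notin J_\gamma$ and $|D\gamma|([\alpha,\beta])<\eps$. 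The condition $\alpha,\beta\notin J_\gamma$ makes $[\alpha,\beta]$ a continuity set for $|D\gamma|$, so the weak-$*$ convergence yields $|D\gamma_k|([\alpha,\beta])\to|D\gamma|([\alpha,\beta])<\eps$, which is therefore $<2\eps$ for $k$ large.

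Once this setup is in place, the conclusion is just oscillation control. For $k$ large, $x_k\in[\alpha,\beta]$, and for every $y\in[\alpha,\beta]$ the fundamental theorem of calculus for the $C^1$ map $\gamma_k$ gives
$$|\gamma_k(x_k)-\gamma_k(y)|\le\int_\alpha^\beta|\gamma_k'|\,dt=|D\gamma_k|([\alpha,\beta])<2\eps.$$
Averaging over $y$ and setting $m_k:=\tfrac{1}{\beta-\alpha}\int_\alpha^\beta\gamma_k\,dy$, $m:=\tfrac{1}{\beta-\alpha}\int_\alpha^\beta\gamma\,dy$, I get $|\gamma_k(x_k)-m_k|\le 2\eps$, while $m_k\to m$ by the $L^1$ convergence. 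Finally, for a.e.\ $y\in[\alpha,\beta]$ one has $|\gamma(y)-\gamma(x_\infty)|\le|D\gamma|([\alpha,\beta])<\eps$ (since $\gamma$ is continuous at $x_\infty$), whence $|m-\gamma(x_\infty)|\le\eps$. Combining the three estimates, $\limsup_k|\gamma_k(x_k)-\gamma(x_\infty)|\le 3\eps$, and choosing $\eps<\delta/6$ yields the desired contradiction.

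The only (mild) technical point is the simultaneous choice of $[\alpha,\beta]$: it must be small in $|D\gamma|$-mass (available because $|D\gamma|$ is atom-free at $x_\infty$) and have endpoints outside $J_\gamma$ (available because $J_\gamma$ is countable). Both conditions are needed — the first to bound the oscillation of $\gamma_k$ on $[\alpha,\beta]$ by the total variation, the second to turn weak-$*$ convergence of $|D\gamma_k|$ into honest numerical convergence on $[\alpha,\beta]$.
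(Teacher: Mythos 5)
Your proof is correct and follows essentially the same strategy as the paper's: argue by contradiction, pass to a convergent subsequence of points, choose a small interval around the limit with endpoints off $J_\gamma$ so that strict convergence yields convergence of the total variations on that interval, and then use this to control the oscillation of $\gamma_k$. The only cosmetic difference is in the final step: you average $\gamma_k$ over the interval and compare with $\gamma(x_\infty)$, while the paper shows $|\gamma_k(z)-\gamma(z)|\geq\delta/4$ on a full-measure subset and contradicts $L^1$ convergence directly — these are the same idea packaged differently.
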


\begin{proof}
By contradiction, up to a not relabeled subsequence, we may suppose
$$
\exists\delta>0 \quad\exists(\tau_k)\subset K\quad\exists k_0\in\N:\quad|\gamma_k(\tau_k)-\gamma(\tau_k)|>\delta \quad\forall k\geq k_0,
$$
and there exists $\bar \tau\in K$ such that $\tau_k\rightarrow\bar \tau$ as $k\rightarrow+\infty$, since $K$ is compact. Now, consider an open interval $E\subset[a,b]$ such that\footnote{If $\bar\tau=a$ or $\bar\tau=b$, $E$ is a semi-open interval.} $\bar \tau\in E$, $\partial E\subset[a,b]\setminus J_\gamma$, and $|\dot\gamma|(E)<\frac{\delta}{4}$. Such an interval $E$ exists because $|\dot\gamma|(\{\bar\tau\})=0$. By hypothesis on strict convergence, since $|\dot\gamma|(\partial E)=0$, we have
$$
\lim_{k\rightarrow+\infty}\int_{E}|\dot\gamma_k|dt=|\dot\gamma|(E).
$$
So, we can find an index $k_1\in\N$ such that $k_1\geq k_0$ and $\int_E|\dot\gamma_k|dt<\frac{\delta}{2}$, for every $k\geq k_1$. Moreover, there exists $k_2\in\N$, $k_2\geq k_1$, such that $\tau_k\in E$ for every $k\geq k_2$. Now fix $F\subset E$ such that $|F|=|E|$ and $\gamma_{|F}$ can be identified with its natural continuous representative. Pick a point $z\in F$, then
\begin{align*}
|\gamma_k(z)-\gamma(z)|&\geq-|\gamma_k(z)-\gamma_k(\tau_k)|+|\gamma_k(\tau_k)-\gamma(\tau_k)|-|\gamma(\tau_k)-\gamma(z)|\\
&\geq-\left|\int_{\tau_k}^{z}|\dot\gamma_k|dt\right|+\delta-|\dot\gamma|(E)\geq-\int_E|\dot\gamma_k|dt+\delta-\frac{\delta}{4}\\
&\geq-\frac{\delta}{2}+\frac{3}{4}\delta=\frac{\delta}{4}.
\end{align*} 
Therefore, $(\gamma_k)$ does not converge to $\gamma$ pointwise at any point of $F$, which leads to a contradiction with the fact that $\gamma_k\rightarrow\gamma$ in $L^1([a,b])$. So, \eqref{claim} is proved.
\end{proof}

An immediate consequence of Lemma \ref{lem:strict_implies_uniform_2} is that the uniform convergence takes place on the full interval if $J_\gamma=\varnothing$. Precisely the following holds.

\begin{Corollary}\label{cor:strict_implies_uniform_2}
Let $(\gamma_k) \subset W^{1,1}([a,b];\R^2)$ be a sequence
converging strictly $BV([a,b];\R^2)$ to $\gamma \in 
C([a,b];\R^2)\cap BV([a,b];\R^2)$. Then, 
\begin{align*}
\gamma_k\rightarrow\gamma\quad\mbox{ uniformly }\mbox{ as }k\rightarrow+\infty.
\end{align*}
\end{Corollary}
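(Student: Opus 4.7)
The proof is an immediate application of the preceding Lemma \ref{lem:strict_implies_uniform_2}. The plan is to observe that the continuity assumption on the limit $\gamma$ forces its jump set $J_\gamma$ to be empty, so that the exceptional set one has to remove in Lemma \ref{lem:strict_implies_uniform_2} disappears and one may take the compact set $K$ to be the entire interval $[a,b]$.

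More precisely, I would argue as follows. A $BV$-function on an interval is continuous at a point $t$ if and only if its left and right limits at $t$ coincide, which is exactly the condition characterising the complement of $J_\gamma$. Since $\gamma\in C([a,b];\R^2)$, this gives $J_\gamma=\varnothing$, and hence $[a,b]\setminus J_\gamma=[a,b]$. The whole interval $[a,b]$ is therefore a compact subset of $[a,b]\setminus J_\gamma$, and Lemma \ref{lem:strict_implies_uniform_2} applied with $K:=[a,b]$ yields at once $\gamma_k\to\gamma$ uniformly on $[a,b]$, which is the thesis.

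There is essentially no obstacle here: the analytic content, namely the contradiction argument using the coarea-type control $\int_E|\dot\gamma_k|dt\to|\dot\gamma|(E)$ on a short interval $E$ around the accumulation point $\bar\tau$, has already been carried out in the proof of Lemma \ref{lem:strict_implies_uniform_2}. The only role played by the continuity hypothesis in the corollary is to guarantee that no jump points have to be excised from $[a,b]$, so that uniform convergence holds globally rather than only on compact subsets avoiding $J_\gamma$.
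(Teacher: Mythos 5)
Your argument is correct and is exactly the reasoning the paper has in mind: the paper introduces this corollary as ``an immediate consequence'' of Lemma \ref{lem:strict_implies_uniform_2}, and the only observation needed is that continuity of $\gamma$ forces $J_\gamma=\varnothing$, so one may take $K=[a,b]$ in that lemma.
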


 This is clearly impossible if $J_\gamma$ is non-empty, but becomes true (up to extracting a subsequence) if we suitably reparametrize $\gamma_k$ and if instead of $\gamma$ we consider its "completed curve" $\widetilde\gamma$, obtained by filling the jumps with line segments. This is the content
of \cite[Lemma 2.7, Corollary 2.8]{BCS2}, where the authors prove the result for $\gamma\in SBV([a,b];\R^2)$, which is allowed to jump on a finite number of points. Our goal is to provide a further improvement of this result, namely, when $\gamma$ is just a function of bounded variation.\\
To this purpose, suppose that $\gamma\in BV([a,b];\R^2)$. Then, it is well known that $J_\gamma$ is at most countable. So, let $\{t_i\}_{i\in\N}$ be an enumeration\footnote{If the number of jumps is finite, then $\{t_i\}$ is definitively constant.} of $J_\gamma$ and $\gamma^{\pm}(t_i)$ be the traces of $\gamma$ at $t_i$. We want to associate to $\gamma$ a unique continuous curve $\widetilde\gamma$  which "completes" the image of $\gamma$ by means of segments connecting $\gamma^-(t_i)$ to $\gamma^+(t_i)$. In particular, we require that $\widetilde\gamma$ has the same total variation $L$ of $\gamma$ and is compatible with the approximation via strict $BV$-convergence. Precisely we show the following result.

\begin{Lemma}\label{strict implies uniform 3}
Suppose that $(\gamma_k)
	\subset W^{1,1}([a,b] ;\R^2)$ is a sequence 
	converging strictly $BV([a,b] ;\R^2)$ to $\gamma\in BV([a,b] ;\R^2)$.
	Then there exist:
	\begin{itemize}
		\item[(a)] a curve $\widetilde\gamma\in \mathrm{Lip} ([a,b];\R^2)$, 
		\item[(b)]
		a subsequence $(k_j)$ and Lipschitz strictly increasing 
surjective functions $\repa_{k_j}:[a,b]\rightarrow[a,b]$ for any $j \in \mathbb N$, 
with $\sup_j \Vert \dot \repa_{k_j}\Vert_\infty < +\infty$,
	\end{itemize}
	such that
	\begin{align}\label{uniform_rescalings}
	\lim_{\indice \to +\infty} 
\gamma_{k_j}\circ \repa_{k_j} = \widetilde\gamma\quad\mbox{ uniformly in }[a,b].
	\end{align}
Moreover, $\widetilde\gamma$ does not depend on the approximating sequence $\gamma_k$, in the sense that if $(\eta_k)\subset W^{1,1}([a,b];\R^2)$ is another sequence converging strictly $BV([a,b];\R^2)$ to $\gamma$, then the corresponding $\widetilde\eta\in\mathrm{Lip}([a,b];\R^2)$ coincides with $\widetilde\gamma$.
\end{Lemma}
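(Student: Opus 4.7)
The plan is to build the reparametrizations $\repa_k$ from an arc-length-type regularization of $\gamma_k$. Concretely, I would set
\[
\sigma_k(t):=\int_a^t\bigl(|\dot\gamma_k(s)|+1\bigr)\,ds,\qquad M_k:=\sigma_k(b)=\int_a^b|\dot\gamma_k|\,ds+(b-a),
\]
and define $\repa_k:=\sigma_k^{-1}\circ\phi_k$ with $\phi_k(s):=\frac{M_k}{b-a}(s-a)$; the additive $+1$ is what keeps $\sigma_k^{-1}$ Lipschitz uniformly in $k$ even where $|\dot\gamma_k|$ is small. A direct computation gives $\dot \repa_k(s)\le M_k/(b-a)$ and $|(\gamma_k\circ \repa_k)'(s)|\le M_k/(b-a)$, and strict convergence yields $M_k\to L+(b-a)$ with $L:=|\dot\gamma|([a,b])$, so both families are equi-Lipschitz. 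Combined with the equi-boundedness of $\|\gamma_k\|_\infty$ (a standard consequence of strict $BV$-convergence together with the $L^1$-convergence), Arzelà--Ascoli would then extract a subsequence along which $\repa_{k_j}\to \repa_\infty$ and $\gamma_{k_j}\circ \repa_{k_j}\to \widetilde\gamma$ uniformly on $[a,b]$.

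Next I would identify $\repa_\infty$ intrinsically in terms of $\gamma$. Writing the definition of $\repa_k$ as $V_k(\repa_k(s))+\repa_k(s)-a=\frac{M_k}{b-a}(s-a)$, with $V_k(t):=\int_a^t|\dot\gamma_k|\,ds$, strict convergence provides weak$^\ast$ convergence of the variation measures together with mass convergence, hence $V_k(t)\to V(t):=|\dot\gamma|([a,t])$ at every $t\notin J_\gamma$. For $s$ with $\repa_\infty(s)\notin J_\gamma$ I would sandwich $V_k(\repa_k(s))$ between $V_k$-values at continuity points arbitrarily close to $\repa_\infty(s)$ (dense in $[a,b]$ since $J_\gamma$ is countable), obtaining $V_k(\repa_k(s))\to V(\repa_\infty(s))$. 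Passing to the limit yields
\[
V(\repa_\infty(s))+\repa_\infty(s)-a=\tfrac{L+(b-a)}{b-a}(s-a),
\]
which uniquely determines $\repa_\infty$ off the preimages of $J_\gamma$ and forces a plateau at each $t_i\in J_\gamma$ on an interval $[s_1^i,s_2^i]$ of length $(b-a)|\gamma^+(t_i)-\gamma^-(t_i)|/(L+b-a)$, computed as the gap between the one-sided limits of the above identity at $t_i$.

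To finish, for $\repa_\infty(s)\notin J_\gamma$ Lemma~\ref{lem:strict_implies_uniform_2} applied on a compact neighbourhood of $\repa_\infty(s)$ in $[a,b]\setminus J_\gamma$ gives $\gamma_{k_j}(\repa_{k_j}(s))\to\gamma(\repa_\infty(s))$, so $\widetilde\gamma(s)=\gamma(\repa_\infty(s))$. On each plateau $[s_1^i,s_2^i]$ the limit curve $\widetilde\gamma$ is Lipschitz with constant at most $(L+b-a)/(b-a)$ and (by continuity) endpoints $\gamma^\pm(t_i)$, so its total length is at most $(L+b-a)/(b-a)\,(s_2^i-s_1^i)=|\gamma^+(t_i)-\gamma^-(t_i)|$. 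Since this equals the Euclidean distance between the endpoints, equality in the length inequality forces $\widetilde\gamma\mres[s_1^i,s_2^i]$ to be the constant-speed segment from $\gamma^-(t_i)$ to $\gamma^+(t_i)$; this characterization of $\widetilde\gamma$ depends only on $\gamma$, giving the independence from the approximating sequence. The main obstacle I expect is precisely the identification $V_k(\repa_k(s))\to V(\repa_\infty(s))$ when $J_\gamma$ is infinite and possibly dense: the argument in \cite{BCS2} handles jumps one at a time and cannot be adapted literally, and the density of continuity points of the variation function is the essential ingredient replacing it.
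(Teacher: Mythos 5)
Your proposal follows the same strategy as the paper: the arc-length-type reparametrization with the additive $+1$ to keep the inverse uniformly Lipschitz, the equi-Lipschitz bound and compactness extraction, the intrinsic identification of the limit reparametrization (you write it as a functional equation for $h_\infty$ via $V$; the paper identifies $\alpha$ as $s^{-1}$ where $s$ is the pointwise limit of $s_k$ off $J_\gamma$), and the length-versus-distance argument forcing the plateaus of $h_\infty$ to map onto constant-speed segments. You also correctly flag the genuinely new difficulty compared to the $SBV$-finite-jumps case: the sandwiching argument via density of continuity points of the variation function, which is indeed the essential ingredient.

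There is, however, one step that does not work as literally written. You invoke Lemma~\ref{lem:strict_implies_uniform_2} on ``a compact neighbourhood of $h_\infty(s)$ in $[a,b]\setminus J_\gamma$'' to conclude $\gamma_{k_j}(h_{k_j}(s))\to\gamma(h_\infty(s))$. When $J_\gamma$ accumulates at $h_\infty(s)$ (which is precisely the new case the lemma is meant to cover), no such neighbourhood with nonempty interior exists: the only compact subsets of $[a,b]\setminus J_\gamma$ near $h_\infty(s)$ are totally disconnected, and Lemma~\ref{lem:strict_implies_uniform_2} applied to such a set $K$ controls $\gamma_{k_j}-\gamma$ on $K$ but not at the nearby points $h_{k_j}(s)$, which need not lie in $K$. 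The repair is exactly the sandwich idea you already use for $V_k$: pick $t^-<h_\infty(s)<t^+$ with $|\dot\gamma|(\{t^\pm\})=0$; for $j$ large $h_{k_j}(s)\in(t^-,t^+)$, so
\[
\bigl|\gamma_{k_j}(h_{k_j}(s))-\gamma_{k_j}(h_\infty(s))\bigr|\le\int_{t^-}^{t^+}|\dot\gamma_{k_j}|\,d\tau\longrightarrow|\dot\gamma|\bigl([t^-,t^+]\bigr),
\]
which tends to $|\dot\gamma|(\{h_\infty(s)\})=0$ as $t^\pm\to h_\infty(s)$, while $\gamma_{k_j}(h_\infty(s))\to\gamma(h_\infty(s))$ by Lemma~\ref{lem:strict_implies_uniform_2} applied to the singleton $\{h_\infty(s)\}$. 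A similar remark applies to the claim that ``(by continuity) endpoints $\gamma^\pm(t_i)$'' on each plateau: when $J_\gamma$ accumulates at $t_i$, one must argue that the non-plateau set is still dense near $s_1^i$, $s_2^i$, which follows because consecutive plateaus are always separated by a gap of length at least proportional to the distance between the corresponding jump points. With these two points patched, the argument is complete and matches the paper's.
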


\begin{proof}
	The lengths $L_k$ of $\gamma_k$ and $L$ of $\gamma$ are given by 
	\begin{align*}
		&L_k=\int_{a}^b|\dot{\gamma}_k|~d\tau,\qquad L=|\dot{\gamma}|([a,b]).
	\end{align*}
Since, by assumption, $\gamma_k\rightarrow\gamma$ strictly $BV([a,b];\R^2)$, we have that $L_k\rightarrow L$ as $k\rightarrow+\infty$. For every $k\in\N$, define
	\begin{equation}\label{eq:s_k}
s_k:[a,b]\rightarrow[0,L], \qquad 
		\begin{aligned}
			s_k(t):=
			\frac{L}{L_k+b-a}\int_{a}^t\Big(|\dot\gamma_k(\tau)|+1\Big)~d\tau,
		\end{aligned}
	\end{equation}
	with Lipschitz inverse $\alpha_k:=s_k^{-1}:[0,L]\rightarrow[a,b]$. Notice that 
\begin{align}\label{hk derivative}
\dot{\alpha}_k(s)=\frac{1}{\dot s_k(\alpha_k(s))}=\frac{L_k+b-a}{L}\cdot\frac{1}{|\dot\gamma_k(\alpha_k(s))|+1}\leq\frac{L_k+b-a}{L}\leq C\qquad {\rm for~a.e.}~
	s\in [0, L],
\end{align}
for some constant $C>0$ independent of $k$.
Define 
	\begin{align*}
\bar\gamma_k:[0,L]\rightarrow\R^2, \qquad 
		\bar\gamma_k(s):=\gamma_k(\alpha_k(s))
		\qquad\forall s\in [0,L].
	\end{align*} 
Since
	$$\left|\frac{d\bar\gamma_k}{ds}(s)\right|\leq 
	\frac{|\dot\gamma_k(\alpha_k(s))|}{|\dot s_k(\alpha_k(s))|}\leq\frac{L_k+b-a}{L}\leq C \qquad {\rm for~a.e.}~
	s\in [0, L],
	$$	
	the sequence $(\bar\gamma_k)$ is 
	bounded in $W^{1,\infty}([0,L];\R^2)$. Thus, there exists a subsequence $(k_j)\subset(k)$ and $\bar\gamma\in W^{1,\infty}([0,L];\R^2)$ such that 
	\begin{align}\label{thesis1}
		\bar \gamma_{k_j}\rightharpoonup\bar\gamma\text{ weakly* in }W^{1,\infty}([0,L];\R^2)\text{ and uniformly in }[0,L].
	\end{align}
Then, we conclude by defining $\widetilde\gamma$ and $h_k$ as the composition of $\bar\gamma$ and $\alpha_k$ with an affine increasing diffeomorphism $\psi:[a,b]\to[0,L]$.\\
It remains to show the indipendence of $\bar\gamma$ from the sequence $\gamma_k$. So, suppose that $\eta_k\in W^{1,1}([a,b];\R^2)$ converges to $\gamma$ strictly $BV([a,b];\R^2)$. Let $\sigma_k:[a,b]\rightarrow[0,L]$ be defined as $s_k$ with $\eta_k$ in place of $\gamma_k$ and $\beta_k:=\sigma_k^{-1}:[0,L]\rightarrow[a,b]$ its (equi-)Lipschitz inverse. As before, we obtain that there exists $(k_h)\subset(k)$ and $\bar\eta$ such that 
$$
\bar \eta_{k_h}\rightharpoonup\bar\eta\text{ weakly* in }W^{1,\infty}([0,L];\R^2)\text{ and uniformly in }[0,L].
$$
Observe that for any open interval $J\subseteq [0,L]$,
	$$\int_{J}|\dot{\bar\gamma}|ds\leq
	\liminf_{k\rightarrow+\infty}\int_{J}|\dot{\bar\gamma}_{k}|ds \leq |J| \liminf_{k\rightarrow +\infty}\frac{L_k+b-a}{L}=\frac{L+b-a}{L}|J|,
	$$
and thus
	\begin{equation}\label{eq:subunitary}
		|\dot{\bar\gamma}|\leq 1+\frac{b-a}{L}\text{ a.e. in }[0,L].
	\end{equation}

Now, fix $i\in\N$ and take any sequence ${(t_{i,j}^\pm)}_j\subset[a,b]\setminus J_\gamma$ such that $t_{i,j}^-\nearrow t_i^-$ and $t_{i,j}^+\searrow t_i^+$ as $j\rightarrow+\infty$. 
	By Lemma \ref{lem:strict_implies_uniform_2} and definition of $\gamma^{\pm}$, we have 
	\begin{align}\label{limit tij}
\lim_{j \to +\infty}\gamma_{k_j}(t_{i,j}^\pm)= \gamma^\pm(t_i).
\end{align}
Setting
\begin{equation}\label{skj}
	\begin{aligned}
		r_{i,j}^-
		:=&
		s_{k_j}(t_{i,j}^-)=\frac{L}{L_{k_j}+b-a}\int_{a}^{t_{i,j}^-}\big(|\dot\gamma_{k_j}|+1 \big)~d\tau,
		\\
		r^+_{i,j}
		:=& 
		s_{k_j}(t_{i,j}^+)=
\frac{L}{L_{k_j}+b-a}\int_{a}^{t_{i,j}^+}\big(|\dot\gamma_{k_j}|+1 \big)~d\tau,
	\end{aligned}
\end{equation}
	we have 
	\begin{equation}\label{eq:conv_rij}
		\begin{aligned}
			& 
			\lim_{j \to +\infty}
			r_{i,j}^- = \frac{L}{L+b-a}|\dot\gamma|([a,t_i))=:s^-(t_i),
			\\
			&\lim_{j \to +\infty}
			r_{i,j}^+= \frac{L}{L+b-a}|\dot\gamma|([a,t_i])= \frac{L}{L+b-a}\big[|\dot\gamma|([a,t_i))+|\gamma^+(t_i)-\gamma^-(t_i)|\big]=:s^+(t_i).
		\end{aligned}
	\end{equation}
	As a consequence of \eqref{thesis1}, \eqref{limit tij}, and \eqref{eq:conv_rij},
	we get
	$$	
	\bar\gamma(s^\pm(t_i))\leftarrow\bar\gamma_{k_\indice}(
	r_{i,j}^\pm)=\gamma_{k_j}(\alpha_{k_j}(
	r_{i,j}^\pm
	))=\gamma_{k_j}(t_{i,j}^\pm)
	\rightarrow 
	\gamma^\pm(t_i)\quad {\rm ~as~} \indice \to +\infty.
	$$
	Therefore the curve $\bar\gamma$ 
	maps the segment $[s^-(t_i),s^+(t_i)]$ into a curve joining
$\gamma^-(t_i)$ 
	and $\gamma^+(t_i)$. Now, since $s^+(t_i)-s^-(t_i)=\frac{L}{L+b-a}|\gamma^+(t_i)-\gamma^-(t_i)|$, from \eqref{eq:subunitary}
	we conclude that $\bar \gamma$ 
	coincides with the $\left(1+\frac{b-a}{L}\right)$-speed parametrization $\ell_i$ of the segment 
joining $\gamma^-(t_i)$ and $\gamma^+(t_i)$
	on $[s^-(t_i),s^+(t_i)]$. Hence we have shown that for every $i\in\N$
	\begin{align*}
		\gamma_{k_j}\circ \alpha_{k_j}\rightarrow  \ell_i\text{ uniformly in }[s^-(t_i),s^+(t_i)] {\rm ~as~} \indice \to +\infty.
	\end{align*}
An analogous conclusion holds also for $\eta_{k_h}$: indeed, let $\sigma_{k_h}(t_{i,h}^\pm)$ be as in \eqref{skj} but with $\eta_{k_h}$ in place of $\gamma_{k_j}$, then it is clear that $\sigma_{k_h}(t_{i,h}^\pm)\to s^\pm(t_i)$ as $h\to+\infty$ and so
\begin{align*}
		\eta_{k_h}\circ \beta_{k_h}\rightarrow  \ell_i\text{ uniformly in }[s^-(t_i),s^+(t_i)] {\rm ~as~} h \to +\infty.
	\end{align*}
	Therefore, $\bar\eta=\bar\gamma$ on $S=\cup_{i\in\N}S_i$, where $S_i:=[s^-(t_i),s^+(t_i)]$. It remains to show that $\bar\eta=\bar\gamma$ on $[0,L]\setminus S$.\\
By \eqref{hk derivative}, up to extract a not relabeled subsequence, we can assume that there exists $
\alpha\in W^{1,\infty}([0,L])$ such that
\begin{align}\label{h unif}
\alpha_{k_j}\rightarrow \alpha \quad\mbox{uniformly in } [0,L] \mbox{ as }j\to+\infty
\end{align}
and, for the same reason, there exists $\beta\in W^{1,\infty}([0,L])$ such that
\begin{align}\label{g unif}
\beta_{k_h}\rightarrow \beta \quad\mbox{uniformly in }  [0,L] \mbox{ as }h\to+\infty.
\end{align}
From Lemma \ref{lem:strict_implies_uniform_2}, we deduce that $\bar\gamma=\gamma\circ \alpha$ on every compact subset $H\subset[0,L]\setminus S$. But, since $\alpha$ does not depend on the compact $H$, we deduce that $\bar\gamma=\gamma\circ \alpha$ on $[0,L]\setminus S$. In the same way, we infer that $\bar\eta=\gamma\circ \beta$ on $[0,L]\setminus S$.
	Let us show that $\alpha=\beta$ on $[0,L]\setminus S$. Indeed, notice that by definition of $s_k$,
$$
s_k(t)\rightarrow s(t):=\frac{L}{L+b-a}(t-a+|\dot\gamma|([a,t]))\quad\forall t\in[a,b]\setminus J_\gamma.
$$
The map $s:[a,b]\rightarrow[0,L]$ is strictly increasing with jumps at each point of $J_\gamma$. Notice that the traces of $s$ at every $t_i\in J_\gamma$ are exactly the numbers $s^\pm(t_i)$ in \eqref{eq:conv_rij}. We claim that $\alpha=s^{-1}$ on $[0,L]\setminus S$. Indeed, by \eqref{h unif} we have that for every $t\in[a,b]\setminus J_\gamma$
$$
t=\alpha_{k_j}(s_{k_j}(t))\rightarrow \alpha(s(t))\quad\mbox{as }j\to+\infty,
$$
then $\alpha=s^{-1}$ on $s([a,b]\setminus J_\gamma)=[0,L]\setminus S$. In the same way, using $\eqref{g unif}$ one can prove that $\beta=s^{-1}$ on $[0,L]\setminus S$ and we conclude the proof.

\begin{Remark}
From the previous proof, we deduce that the "completed" curve $\widetilde\gamma$ does not depend on the subsequence of the approximating sequence $\gamma_k$. Moreover, we do not need to discuss the dependence on the reparametrization $h_k$, because, for our purpose, we shall consider in the sequel a Plateau-type problem associated to $\gamma_k$ which is independent of the reparametrization of the curve.
\end{Remark}

\end{proof}

\subsection{Planar Plateau-type problem}
\label{subsec:planar_Plateau_type_problem}
In \cite{BCS2}, the authors consider the following planar Plateau-type problem spanning a closed Lipschitz curve $\varphi:\Suno\to\R^2$ (see also \cite{Pa} and \cite{FFM}):
\begin{align}\label{Plateau1}
P(\varphi)=\inf\left\{\int_{B_1}|Jv|~dx: \,v\in \mathrm{Lip}(B_1;\R^2), v_{|\partial B_1}=
\homogeneousmap
\right\}
\end{align}
 and the corresponding relaxation problem for a general $BV$-curve $\gamma:\Suno\to\R^2$:
\begin{align}\label{Plateau_rel1}
	\rilP(\gamma)
:=\inf\left\{\liminf_{n\rightarrow +\infty}P(\varphi_n): 
\varphi_n\in \mathrm{Lip}(\Suno;\R^2), \varphi_n\rightarrow\gamma\text{ strictly }BV(\Suno;\R^2)
\right\}.
\end{align}
They show that, for $\varphi\in$ Lip$(\Suno;\R^2)$, $P(\varphi)$ is invariant under rescaling of the integration domain, precisely if $r>0$ and 
\begin{align}\label{def resc}
\varphi_r(y):=\varphi\left(\frac{y}{r}\right)\quad y\in\partial B_r, 
\end{align}
then
\begin{align}\label{resc}
P(\varphi)=P(\varphi_r)=\inf\left\{\int_{B_r}|Jv|~dx: \,v\in \mathrm{Lip}(B_r;\R^2), v_{|\partial B_r}=
\homogeneousmap_r
\right\}.
\end{align}
$P(\cdot)$ is also invariant under reparametrization of the boundary datum, namely
\begin{align}\label{inv}
P(\varphi)=P(\varphi\circ h)\qquad \forall h:\Suno\to\Suno \mbox{ Lipschitz homeomorphism}. 
\end{align}
 Moreover, the following continuity result for $P(\cdot)$ holds.
\begin{Lemma}[\textbf{Continuity of $P$}]\label{lem:continuity_of_P}
Let $\homogeneousmap\in\mathrm{Lip}(\Suno;\R^2)$ and suppose that  
$(\varphi_k)_k\subset\mathrm{Lip}(\Suno;\R^2)$ is such that $$\varphi_k\rightarrow\homogeneousmap\quad\mbox{uniformly}\quad\mbox{and }\quad \sup_k|\dot\varphi_k|(\Suno)<+\infty.$$ Then $P(\varphi_k)\rightarrow P(\homogeneousmap)$.
\end{Lemma}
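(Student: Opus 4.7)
The plan is to prove both inequalities $\limsup_k P(\varphi_k)\le P(\varphi)$ and $\liminf_k P(\varphi_k)\ge P(\varphi)$ by a single symmetric construction: shrink a near-optimal competitor to a slightly smaller disk (using the scaling invariance \eqref{resc}), and bridge the resulting gap with a thin annulus on which the two boundary data are radially interpolated.

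For the upper bound, I would fix $\eps>0$ and pick $v\in\mathrm{Lip}(B_1;\R^2)$ with $v|_{\partial B_1}=\varphi$ and $\int_{B_1}|Jv|\,dx\le P(\varphi)+\eps$. For a small fixed $\delta\in(0,1)$, the rescaling $\tilde v_\delta(x):=v(x/(1-\delta))$ on $B_{1-\delta}$ satisfies $\int_{B_{1-\delta}}|J\tilde v_\delta|\,dx=\int_{B_1}|Jv|\,dy\le P(\varphi)+\eps$ by change of variables and has trace $\bar\varphi(\theta)$ on $\partial B_{1-\delta}$. On the annulus $A_{1-\delta,1}$, in polar coordinates $x=r\omega$ with $\omega=(\cos\theta,\sin\theta)$, I would define the radial interpolation
\[
v_k(r\omega):=\tfrac{1-r}{\delta}\,\bar\varphi(\theta)+\tfrac{r-(1-\delta)}{\delta}\,\bar\varphi_k(\theta),
\]
whose traces match $\bar\varphi$ on $\partial B_{1-\delta}$ and give $\bar\varphi_k$ on $\partial B_1$. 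Gluing produces a Lipschitz competitor for $P(\varphi_k)$.

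The key step is the Jacobian estimate on the annulus. From the polar identity $rJv_k=\partial_r v_k\wedge\partial_\theta v_k$ together with $|\partial_r v_k|\le\tfrac{1}{\delta}\|\varphi-\varphi_k\|_\infty$ and $|\partial_\theta v_k|\le|\dot{\bar\varphi}(\theta)|+|\dot{\bar\varphi}_k(\theta)|$, the integration in $r$ cancels the factor $\tfrac{1}{\delta}$ against the annulus width $\delta$, yielding
\[
\int_{A_{1-\delta,1}}|Jv_k|\,dx\le \|\varphi-\varphi_k\|_\infty\bigl(|\dot\varphi|(\Suno)+|\dot\varphi_k|(\Suno)\bigr).
\]
The uniform convergence $\varphi_k\to\varphi$ together with the uniform bound $\sup_k|\dot\varphi_k|(\Suno)<+\infty$ makes this $o(1)$ as $k\to+\infty$ for our fixed $\delta$. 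Hence $P(\varphi_k)\le P(\varphi)+\eps+o(1)$, and sending $k\to+\infty$ and then $\eps\to 0$ gives the upper bound.

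The reverse inequality is obtained by the mirror construction: along a subsequence $(k_j)$ realizing $\liminf_k P(\varphi_k)$, pick $v_{k_j}$ with trace $\varphi_{k_j}$ and $\int|Jv_{k_j}|\,dx\le P(\varphi_{k_j})+1/k_j$, rescale on $B_{1-\delta}$, and interpolate in the annulus between $\varphi_{k_j}$ (inside) and $\varphi$ (outside). The same estimate yields a Lipschitz competitor for $P(\varphi)$ of cost at most $P(\varphi_{k_j})+1/k_j+\|\varphi-\varphi_{k_j}\|_\infty\,C$ with $C:=|\dot\varphi|(\Suno)+\sup_k|\dot\varphi_k|(\Suno)<+\infty$, and $j\to+\infty$ concludes. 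I do not foresee any genuine obstacle: the only subtle point is that the factor $1/\delta$ from the radial interpolation exactly cancels with the width $\delta$ of the annulus, which is precisely what the cross-product structure of $rJv_k=\partial_r v_k\wedge\partial_\theta v_k$ provides, allowing one to decouple the two hypotheses from the fixed parameter $\delta$.
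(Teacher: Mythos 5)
Your proof is correct. The paper itself does not prove this lemma — it is imported from the reference [BCS2] and only stated here — so there is no internal proof to compare against. That said, your method is exactly the standard device used throughout this line of work and visible elsewhere in the paper (e.g.\ in the proof of Theorem~\ref{TV and Plateau}, where a competitor is built by gluing a rescaled map on an inner disk to a radial interpolation on a thin annulus, and the annular Jacobian is shown to be small). Your computation of the annular contribution is sound: the identity $r\,Jv_k=\partial_r v_k\wedge\partial_\theta v_k$ together with $\vert a\wedge b\vert\le\vert a\vert\,\vert b\vert$, the bound $\vert\partial_r v_k\vert\le\delta^{-1}\Vert\varphi-\varphi_k\Vert_\infty$, and the convexity bound $\vert\partial_\theta v_k\vert\le\vert\dot{\bar\varphi}\vert+\vert\dot{\bar\varphi}_k\vert$, do make the factor $\delta^{-1}$ cancel the width $\delta$ after integrating in $r$, leaving
\[
\int_{A_{1-\delta,1}}\vert Jv_k\vert\,dx\le\Vert\varphi-\varphi_k\Vert_\infty\Bigl(\vert\dot\varphi\vert(\Suno)+\vert\dot\varphi_k\vert(\Suno)\Bigr),
\]
which vanishes as $k\to+\infty$ thanks to both hypotheses. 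The inner rescaling leaves $\int\vert Jv\vert$ unchanged by \eqref{resc}, the glued map is Lipschitz with the correct trace, and the mirror construction gives the reverse inequality. No gap; the only remark is that one could avoid the two separate constructions and observe that your estimate is symmetric in $(\varphi,\varphi_k)$, yielding $\vert P(\varphi_k)-P(\varphi)\vert\le\Vert\varphi-\varphi_k\Vert_\infty\bigl(\vert\dot\varphi\vert(\Suno)+\vert\dot\varphi_k\vert(\Suno)\bigr)$ in one stroke.
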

In \cite[Lemma 2.14]{BCS2}, the authors show that if $\gamma\in SBV(\Suno;\R^2)$ has a finite number of jump points, then 
$\rilP(\gamma)=P(\widetilde\gamma)$, 
where $\widetilde\gamma$ is the Lipschitz curve\footnote{$\Suno$ is identified with $[0,2\pi]$.} of Lemma \ref{strict implies uniform 3} associated to $\gamma$. We want to extend this result to the case $\gamma\in BV(\Suno;\R^2)$.

\begin{Lemma}\label{lem_plateaurel}
	Let $\gamma\in BV(\Suno;\R^2)$ and $\widetilde\gamma:\Suno\rightarrow\R^2$ be the corresponding Lipschitz curve 
of Lemma \ref{strict implies uniform 3}. Then 
	\begin{align}
	\rilP(\gamma)=P(\widetilde\gamma).
	\end{align}
\end{Lemma}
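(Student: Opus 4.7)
The argument splits into two inequalities, $\overline{P}(\gamma) \geq P(\widetilde\gamma)$ and $\overline{P}(\gamma) \leq P(\widetilde\gamma)$.

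For the lower bound, let $(\varphi_n) \subset \mathrm{Lip}(\Suno;\R^2)$ be any admissible sequence, i.e.\ $\varphi_n \to \gamma$ strictly $BV(\Suno;\R^2)$. After passing to a subsequence that realizes $\liminf_n P(\varphi_n)$, I invoke Lemma \ref{strict implies uniform 3} (with $\Suno$ identified with $[0,2\pi]$) to extract a further subsequence, still denoted $(\varphi_n)$, and Lipschitz strictly increasing surjections $h_n: \Suno \to \Suno$ with $\sup_n \|\dot h_n\|_\infty < +\infty$, such that $\varphi_n \circ h_n \to \widetilde\gamma$ uniformly. The invariance \eqref{inv} yields $P(\varphi_n) = P(\varphi_n \circ h_n)$. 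A monotone change of variables gives $|\dot{(\varphi_n \circ h_n)}|(\Suno) = |\dot\varphi_n|(\Suno) \to |\dot\gamma|(\Suno) =: L$, so the $BV$ norms of $\varphi_n \circ h_n$ are equibounded. Lemma \ref{lem:continuity_of_P} then gives $P(\varphi_n \circ h_n) \to P(\widetilde\gamma)$, so $\liminf_n P(\varphi_n) \geq P(\widetilde\gamma)$, and taking the infimum over all admissible sequences yields $\overline{P}(\gamma) \geq P(\widetilde\gamma)$.

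For the upper bound, the natural candidate is $\varphi_n := \widetilde\gamma \circ \psi_n$ with $\psi_n: \Suno \to \Suno$ Lipschitz strictly increasing homeomorphisms. By \eqref{inv} we have $P(\varphi_n) = P(\widetilde\gamma)$ for every $n$, and by monotone change of variables the total variation of $\varphi_n$ equals that of $\widetilde\gamma$, which in turn equals $L = |\dot\gamma|(\Suno)$. Hence $|\dot\varphi_n|(\Suno) \to |\dot\gamma|(\Suno)$ holds automatically; the only requirement is to ensure $\varphi_n \to \gamma$ in $L^1$. Using the structural description from the proof of Lemma \ref{strict implies uniform 3}, the curve $\widetilde\gamma$ traverses the segment from $\gamma^-(t_i)$ to $\gamma^+(t_i)$ at each jump $t_i \in J_\gamma$ over a parameter interval of length proportional to $|\gamma^+(t_i)-\gamma^-(t_i)|$. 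The plan is to design $\psi_n^{-1}$ so that each such interval is mapped onto an arc around $t_i$ of $\mathcal H^1$-measure at most $1/n$, while on the complement $\psi_n$ is close to the natural identification between the parametrizations of $\widetilde\gamma$ and $\gamma$ off the jump set. Summability $\sum_i |\gamma^+(t_i)-\gamma^-(t_i)| \leq L$ ensures the construction is well-defined, and pointwise convergence at every $t \in \Suno \setminus J_\gamma$, together with uniform boundedness, upgrades to $L^1$ convergence by dominated convergence.

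The main obstacle is the careful handling of the countable set $J_\gamma$ in the above direct construction. A cleaner alternative that bypasses this technicality is a truncation and diagonal argument: let $\gamma^N \in SBV(\Suno;\R^2)$ be obtained from $\gamma$ by retaining only the $N$ largest jumps (and absorbing the remaining jump mass into the absolutely continuous/Cantor parts so that $\gamma^N$ keeps image behaviour and total variation close to $\gamma$). Since $|D^j\gamma|(\Suno) < +\infty$, one checks $\gamma^N \to \gamma$ strictly $BV$, and the corresponding completed curves $\widetilde{\gamma^N}$ converge (after reparametrization, again via Lemma \ref{strict implies uniform 3}) to $\widetilde\gamma$ with bounded total variation, so that Lemma \ref{lem:continuity_of_P} gives $P(\widetilde{\gamma^N}) \to P(\widetilde\gamma)$. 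The finite-jump case \cite[Lemma 2.14]{BCS2} yields $\overline{P}(\gamma^N) = P(\widetilde{\gamma^N})$ for each $N$, and a standard diagonal extraction then produces an admissible sequence for $\gamma$ whose Plateau energies converge to $P(\widetilde\gamma)$, establishing $\overline{P}(\gamma) \leq P(\widetilde\gamma)$ and completing the proof.
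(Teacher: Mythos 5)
Your lower bound argument is exactly the paper's argument, and in fact it proves more than you claim: along the subsequence realizing $\liminf_n P(\varphi_n)$, the further subsequence extracted via Lemma \ref{strict implies uniform 3} (combined with \eqref{inv} and Lemma \ref{lem:continuity_of_P}) has $P(\varphi_{n_j}) \to P(\widetilde\gamma)$, while by construction it also has $P(\varphi_{n_j}) \to \liminf_n P(\varphi_n)$. Hence $\liminf_n P(\varphi_n) = P(\widetilde\gamma)$ for \emph{every} admissible sequence, not merely $\geq$. Since admissible sequences exist (any mollification of $\gamma$ converges strictly in $BV$), taking the infimum immediately gives both inequalities, and you are done. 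This is precisely what the paper does: it runs the subsequence argument on an arbitrary admissible sequence, concludes $P(\gamma_k) \to P(\widetilde\gamma)$ for the whole sequence by the subsequence principle, and invokes the independence of $\widetilde\gamma$ from the approximating sequence (the last clause of Lemma \ref{strict implies uniform 3}). In this sense your proof is not ``two separate inequalities'' but one argument with the second half left unexploited.

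Your separate upper bound constructions are therefore unnecessary, and as written both contain real gaps. In the direct construction $\varphi_n = \widetilde\gamma \circ \psi_n$, the jump set $J_\gamma$ is countable and can be dense in $\Suno$, so the intervals $S_i = [s^-(t_i),s^+(t_i)]$ may have a Cantor-like complement; arranging a single Lipschitz, strictly increasing $\psi_n$ that squeezes each $\psi_n^{-1}(S_i)$ to a pre-assigned small arc around $t_i$ while these arcs remain pairwise disjoint and have summable total length is a genuine construction that you do not carry out (and the naive choice of arc-length $1/n$ at each $t_i$ is not even summable). In the truncation alternative, ``absorbing the remaining jump mass into the absolutely continuous/Cantor parts'' is undefined; the natural definition $\gamma^N := \gamma - \sum_{i>N} (\gamma^+(t_i)-\gamma^-(t_i)) H_{t_i}$ (with $H_{t_i}$ a step function, suitably interpreted on $\Suno$) does converge strictly to $\gamma$, but you then need $P(\widetilde{\gamma^N}) \to P(\widetilde\gamma)$, which requires again an argument along the lines of Lemma \ref{strict implies uniform 3} and Lemma \ref{lem:continuity_of_P} applied to the (non-Lipschitz) sequence $\gamma^N$ — this is not covered by the lemmas as stated, which assume $W^{1,1}$ or Lipschitz approximants. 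So the cleaner and correct route is the one you already have in your first paragraph: observe that it delivers equality, not just one inequality, and stop there.
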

\begin{proof}
	Let $(\gamma_k)_k\subset\mathrm{Lip}(\Suno;\R^2)$ be a 
sequence converging strictly to $\gamma$. Let us consider a not-relabeled subsequence of $(\gamma_k)_k$; by Lemma \ref{strict implies uniform 3} there are a further subsequence $(\gamma_{k_j})_j$ and 
Lipschitz reparametrizations $\widetilde\gamma_{k_j}=\gamma_{k_j}\circ h_{k_j}\in\mathrm{Lip}(\Suno;\R^2)$ 
of $\gamma_{k_j}$ such that $\widetilde\gamma_{k_j}\rightarrow
\widetilde\gamma$ 
uniformly as $j\rightarrow +\infty$.  Moreover, since by Lemma \ref{strict implies uniform 3}(b) the homeomorphism $\repa_{k_j}$ can be chosen with uniformly bounded 
Lipschitz constant, it follows that $\widetilde\gamma_{k_j}$ has uniformly 
bounded total variation. Hence it follows from Lemma 
\ref{lem:continuity_of_P} that 
$P(\widetilde\gamma_{k_j})\rightarrow P(
\widetilde\gamma)$ as $j\rightarrow +\infty$. Thanks to \eqref{inv}, we have also
	$P(\gamma_{k_j})\rightarrow P(\widetilde\gamma)$ as $j\rightarrow +\infty$. Then, since this argument holds for any subsequence of $(\gamma_k)$, we conclude that the whole sequence satisfies
	$P(\gamma_{k})\rightarrow P(\widetilde\gamma)$. Finally, since by Lemma \ref{strict implies uniform 3} $\widetilde\gamma$ does not depend on the approximating sequence, we can repeat the previous argument for another sequence $(\eta_k)\subset$ Lip$(\Suno;\R^2)$ converging strictly to $\gamma$, obtaining that $P(\eta_k)\to P(\widetilde\gamma)$. Therefore, we conclude $\rilP(\gamma)=P(\widetilde\gamma)$.
\end{proof}

As a consequence of the argument in the proof of Lemma \ref{lem_plateaurel}, we easily infer the following continuity property:

\begin{Corollary}\label{cor_continuity_P}
	Let $\gamma\in BV(\Suno;\R^2)$ and $\widetilde\gamma$ be as in Lemma \ref{strict implies uniform 3}, and assume that $(\gamma_k)_k\subset \mathrm{Lip}(\Suno;\R^2)$ is a sequence converging strictly to $\gamma$. Then
	$$\lim_{k\rightarrow +\infty}P(\gamma_k)=\rilP(\gamma)=P(\widetilde\gamma).$$ 
\end{Corollary}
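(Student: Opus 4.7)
The plan is to reduce the statement to the proof of Lemma \ref{lem_plateaurel}, which already does most of the work. The cleanest route is a subsequence-of-subsequence argument: I would show that every subsequence of $\bigl(P(\gamma_k)\bigr)$ admits a further subsequence converging to the common value $P(\widetilde\gamma) = \rilP(\gamma)$, and deduce convergence of the whole sequence by the standard Urysohn principle. The equality $\rilP(\gamma) = P(\widetilde\gamma)$ is already granted by Lemma \ref{lem_plateaurel}.

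More precisely, I would start from an arbitrary subsequence $(\gamma_{k_j})$ of $(\gamma_k)$ and apply Lemma \ref{strict implies uniform 3} to extract a further (not relabeled) subsequence together with Lipschitz strictly increasing surjective reparametrizations $h_{k_j}:\Suno\to\Suno$, with $\sup_j \|\dot h_{k_j}\|_\infty < +\infty$, such that $\gamma_{k_j}\circ h_{k_j} \to \widetilde\gamma$ uniformly on $\Suno$. The reparametrization invariance \eqref{inv} gives $P(\gamma_{k_j}) = P(\gamma_{k_j}\circ h_{k_j})$, so it suffices to check that $P(\gamma_{k_j}\circ h_{k_j}) \to P(\widetilde\gamma)$ via Lemma \ref{lem:continuity_of_P}.

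To invoke Lemma \ref{lem:continuity_of_P} I need the two hypotheses: uniform convergence, which I already have; and a uniform bound on $|\dot{(\gamma_{k_j}\circ h_{k_j})}|(\Suno)$. The latter is produced by the chain-rule estimate
\begin{equation*}
|\dot{(\gamma_{k_j}\circ h_{k_j})}|(\Suno) \;\leq\; \|\dot h_{k_j}\|_\infty \cdot |\dot\gamma_{k_j}|(\Suno),
\end{equation*}
combined with $\sup_j \|\dot h_{k_j}\|_\infty < +\infty$ (given by Lemma \ref{strict implies uniform 3}) and the fact that $|\dot\gamma_{k_j}|(\Suno) \to |\dot\gamma|(\Suno) < +\infty$ from the strict $BV$-convergence. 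Then Lemma \ref{lem:continuity_of_P} applies and yields $P(\gamma_{k_j}\circ h_{k_j}) \to P(\widetilde\gamma)$, hence $P(\gamma_{k_j}) \to P(\widetilde\gamma)$ along this subsequence.

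Since $\widetilde\gamma$ depends only on $\gamma$ (the uniqueness part of Lemma \ref{strict implies uniform 3}), the limit $P(\widetilde\gamma)$ does not depend on which initial subsequence we started from. Therefore every subsequence of $(P(\gamma_k))$ admits a further subsequence converging to $P(\widetilde\gamma)$, so the whole sequence converges to $P(\widetilde\gamma) = \rilP(\gamma)$. I do not anticipate any genuine obstacle here: the statement is essentially an extraction of what is proven inside Lemma \ref{lem_plateaurel}, the only mild care being the explicit verification of the uniform total-variation bound needed by Lemma \ref{lem:continuity_of_P}.
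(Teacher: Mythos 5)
Your proposal is correct and follows essentially the same route as the paper: both proceed by a subsequence-of-subsequence argument, reparametrizing via Lemma \ref{strict implies uniform 3}, invoking the continuity Lemma \ref{lem:continuity_of_P} together with the reparametrization invariance \eqref{inv}, and concluding by the Urysohn principle (the paper carries out exactly this inside the proof of Lemma \ref{lem_plateaurel} and then extracts it). One small simplification: the chain-rule estimate you invoke is not needed, since the total variation of a curve is invariant under reparametrization by an increasing Lipschitz homeomorphism, so in fact $|\dot{(\gamma_{k_j}\circ h_{k_j})}|(\Suno) = |\dot\gamma_{k_j}|(\Suno) \to |\dot\gamma|(\Suno)$ directly.
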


\medskip

\vspace{5mm}

\section{Relaxation results}\label{sec:relaxation}
In this section, we extend the results in \cite[Sec.4]{BCS2} to homogeneous maps as in Definition \ref{hom map}. \\
To start with, it is worth to consider the case of homogeneous extension $u$ of a Lipschitz map $\varphi:\Suno\to\R^2$, namely
\begin{align}\label{hom map Lip}
u(x)=\varphi\left(\frac{x}{|x|}\right)\quad\forall x\in B_\radius\setminus\{(0,0)\}.
\end{align}

 In this case, clearly $u\in W^{1,1}(B_\radius;\R^2)$ and $\int_{B_\radius}|\nabla u|dx=\radius\int_{\Suno}|\dot\varphi|d\mathcal{H}^1$.
The following result extends the validity of \cite[Thm.1]{Pa} also for the relaxation with respect to the strict $BV$-convergence.
\begin{Theorem}\label{TV and Plateau}
Suppose that $\varphi:\Suno\rightarrow\R^2$ is Lipschitz continuous and let $u$ be defined as in \eqref{hom map Lip}. Then 
\begin{equation}\label{eq:ril_TVJ_hom}
\overline{\totvarjac}_{BV}(u,B_\radius)
=
P(\homogeneousmap).
\end{equation}
\end{Theorem}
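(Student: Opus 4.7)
The plan is to establish the equality via two matching inequalities.

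For the upper bound $\overline{\totvarjac}_{BV}(u,B_\radius)\le P(\varphi)$, I would fix $\eps>0$ and, from the definition of $P(\varphi)$, choose a Lipschitz competitor $v:B_1\to\R^2$ with $v_{|\partial B_1}=\varphi$ and $\int_{B_1}|Jv|\,dx\le P(\varphi)+\eps$. For each small $r\in(0,\radius)$ I would paste the rescaled competitor on $B_r$ to the homogeneous map $u$ on the outer annulus, setting
\begin{equation*}
u_r(x):=\begin{cases} v(x/r), & x\in\overline{B_r},\\ \varphi(x/|x|), & x\in B_\radius\setminus\overline{B_r},\end{cases}
\end{equation*}
which matches on $\partial B_r$ and is therefore Lipschitz on $B_\radius$. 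A change of variable gives $\int_{B_r}|Ju_r|\,dx=\int_{B_1}|Jv|\,dx\le P(\varphi)+\eps$, while on the outer annulus the polar expression \eqref{nabla u} shows that $\nabla u$ has rank one, so $Ju\equiv 0$ there. Strict $BV$-convergence $u_r\to u$ as $r\to 0^+$ is immediate: $u_r=u$ off $B_r$ (so $L^1$-convergence is clear), $|Du_r|(B_r)=r\int_{B_1}|\nabla v|\,dy\to 0$, and $|Du|(B_\radius\setminus B_r)=(\radius-r)|\dot\varphi|(\Suno)$ by Proposition \ref{homog extension is BV}, so the total variations converge. A standard mollification then replaces the Lipschitz $u_r$ by $C^1$ maps without disturbing strict $BV$-convergence or $L^1$-convergence of the Jacobians (since $\nabla u_r\in L^\infty$, $J$ is continuous in the relevant $L^p$-neighborhood); a diagonal extraction followed by $\eps\to 0$ concludes.

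For the lower bound, let $(v_k)\subset C^1(B_\radius;\R^2)$ converge strictly $BV$ to $u$. The crucial observation is that for every $r\in(0,\radius)$, $v_k|_{B_r}$ is a Lipschitz competitor in the Plateau problem on $B_r$ with boundary trace $v_k\mres\partial B_r$, so by rescaling invariance \eqref{resc}
\begin{equation*}
\int_{B_\radius}|Jv_k|\,dx\ \ge\ \int_{B_r}|Jv_k|\,dx\ \ge\ P(v_k\mres\partial B_r).
\end{equation*}
I would then argue by contradiction: if $\liminf_k\int_{B_\radius}|Jv_k|\,dx<P(\varphi)$, first extract a subsequence realizing this liminf, and then apply Lemma \ref{lem:inheritance circ} along that subsequence to select $r$ in a set of full measure together with a further subsequence $(v_{k_h})$ such that $v_{k_h}\mres\partial B_r\to u\mres\partial B_r=\varphi_r$ strictly $BV(\partial B_r;\R^2)$. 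Since $\varphi_r$ is continuous, Corollary \ref{cor:strict_implies_uniform_2} upgrades the strict convergence on $\partial B_r$ to uniform convergence, and then Lemma \ref{lem:continuity_of_P} (equiboundedness of total variations being furnished by strict convergence) yields $P(v_{k_h}\mres\partial B_r)\to P(\varphi_r)=P(\varphi)$, contradicting the choice of subsequence.

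The main obstacle is the order in which subsequences are extracted for the lower bound: the subsequence produced by the inheritance lemma depends on the chosen radius $r$, so one must first pass to a subsequence realizing the $\liminf$ and only then invoke Lemma \ref{lem:inheritance circ}. The upper bound is a direct construction whose only noteworthy ingredients are the scale invariance of the Jacobian integral under the rescaling $v\mapsto v(\cdot/r)$ and the rank-one degeneracy $Ju\equiv 0$ on the outer annulus.
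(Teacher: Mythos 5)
Your proof is correct, and the two halves relate to the paper's argument in different ways. The upper bound is essentially the same construction: the paper's recovery sequence \eqref{rec} is exactly your $u_r$ with $r=\radius/k$, the change-of-variable identity \eqref{lim jac} matching your computation. You additionally observe the rank-one degeneracy $Ju\equiv 0$ on the annulus (implicit in the paper) and add a mollification step to pass from Lipschitz to genuinely $C^1$ competitors; the paper glosses over this last point (its $v_k$ are Lipschitz but not $C^1$), so your version is slightly more careful. Your remark that the liminf-realizing subsequence must be extracted \emph{before} invoking Lemma \ref{lem:inheritance circ} is also a genuine subtlety that the paper leaves to the reader.

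The lower bound is where you genuinely diverge. The paper builds an explicit Lipschitz competitor $w_k$ on $B_\radius$ by keeping $v_k$ on $B_\eps$, cone-interpolating between $v_k(\eps x/|x|)$ and $u(\eps x/|x|)$ on the annulus $B_\radius\setminus B_\eps$, and then showing $\int_{B_\radius\setminus B_\eps}|Jw_k|\,dx\to 0$ using the uniform convergence on $\partial B_\eps$ (citing the corresponding computation in \cite{BCS}); since $w_k|_{\partial B_\radius}=\varphi_\radius$, one gets $\int_{B_\radius}|Jw_k|\,dx\ge P(\varphi)$ directly. You instead bypass the pasting construction: you note that $v_k|_{B_r}$ is already a competitor for $P(v_k\mres\partial B_r)$ via the rescaling invariance \eqref{resc}, and then exploit the continuity Lemma \ref{lem:continuity_of_P} (uniform convergence from Corollary \ref{cor:strict_implies_uniform_2}, equibounded variation from strict convergence) to get $P(v_{k_h}\mres\partial B_r)\to P(\varphi_r)=P(\varphi)$. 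Both routes are valid; yours is shorter and avoids constructing and estimating a new map. Interestingly, your route is precisely the one the paper itself adopts later in the lower-bound part of Theorem \ref{Plateau for BV} (equation \eqref{lower bound ineq}), so in a sense you have anticipated the proof of the $BV$ case and specialized it to the Lipschitz one.
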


\begin{proof}
Let us show the upper bound inequality.
Following the proof of Theorem 1 in \cite{Pa}, for $k\geq 2$, a recovery sequence $v_k\in$ Lip$(B_\radius;\R^2)$ is given by
\begin{equation}\label{rec}
v_k(x)=\left\{
\begin{aligned}
&u(x)\quad \mbox{if } |x|> \radius/k,
\\
&(v)_{\frac\radius k}(x)\quad \mbox{if } |x|\leq \radius/k,
\end{aligned}
\right.
\end{equation}
where $v\in \mathrm{Lip}(B_1;\R^2)$ is any map with $v=\homogeneousmap$ on $\partial B_1$ and $(v)_{\frac\radius k}(x):=v\left(\frac{k}{\radius}x\right)$ for $x\in B_\frac{\radius}{k}$. It is not difficult to see that $v_k\rightarrow u$ strongly in $W^{1,1}(B_\radius;\R^2)$ (and hence strictly $BV(B_\radius;\R^2)$). Moreover, by change of variable 
\begin{align}\label{lim jac}
\int_{B_\radius}|{Jv_k}|dx=\int_{B_\frac{\radius}{k}}|{J(v)_{\frac\radius k}}|dx=\int_{B_1}|Jv|dx \quad \forall k\in \N.
\end{align}
Finally, we get
$$
\overline{{\totvarjac}}_{BV}(u,B_\radius)\leq\liminf_{k\rightarrow+\infty}\int_{B_\radius}|{Jv_k}|dx=\int_{B_1}|Jv|dx
$$
for any $v\in \mathrm{Lip}(B_1;\R^2)$ such that $v=
\homogeneousmap
$ on $\partial B_1$, so we deduce that $\overline{{\totvarjac}}_{BV}(u,B_\radius)\leq P(\varphi)$.\\
Now let us prove the lower bound inequality.
Assume that $v_k\in C^1(B_\radius;\R^2)$ is such that $v_k\rightarrow u$ strictly $BV(B_1;\R^2)$. Then
 for almost every $\rho<\radius$, 
there exists a subsequence $(v_{k_h})$ (depending on $\rho$) 
such that its restriction to $\partial B_\rho$ converges strictly $BV(\partial B_\rho;\R^2)$ to $u_{|\partial B_\rho}$. So, fix $\eps<1$ and a 
not-relabeled 
subsequence of $(v_k)$ such that
\begin{equation}\label{strict}
{v_k}_{|\partial B_\eps}\rightarrow u_{|\partial B_\eps}\quad \mbox{strictly }BV(\partial B_\eps;\R^2).
\end{equation}
Now, define $w_k:B_\radius\rightarrow\R^2$ as
\begin{equation}
w_k(x)=\left\{
\begin{aligned}
&v_k(x)\quad \mbox{if } |x|\leq\eps\\
&\frac{\radius-|x|}{\radius-\eps}v_k\left(\eps\frac{x}{|x|}\right)+\frac{|x|-\eps}{\radius-\eps}u\left(\eps\frac{x}{|x|}\right)\quad \mbox{if } \eps \leq |x|\leq \radius.
\end{aligned}
\right.
\end{equation}
Then $w_k$ is Lipschitz and $w=u$ on $\partial B_\radius$. Moreover, by \eqref{strict}, the convergence of $v_k$ to $u$ on $\partial B_\eps$ is also uniform, so we have (see the proof of \cite[Proposition 3.3, (3.29)]{BCS})
\begin{align}\label{jacob infinitesimal}
\lim_{k\rightarrow +\infty}\int_{B_\radius\setminus B_\eps}|Jw_k|dx=0.
\end{align}
Finally, since $w_k=v_k$ in $B_\eps$, by \eqref{jacob infinitesimal} we get
\begin{align}\label{lower limit}
\liminf_{k\rightarrow +\infty}\int_{ B_\radius}|Jv_k|dx\geq\liminf_{k\rightarrow +\infty}\int_{ B_\eps}|Jv_k|dx=\liminf_{k \rightarrow +\infty}\int_{B_\radius}|Jw_k|dx\geq P(u\mres\partial B_\radius)=P(\varphi_\radius)=P(\homogeneousmap),
\end{align}
where we used \eqref{resc}. We conclude by taking the infimum in the left hand side.
\end{proof}

\begin{Corollary}\label{area and Plateau}
Let $\varphi$ and $u$ as in Theorem \ref{TV and Plateau}. Then 
\begin{equation}\label{eq:area_for_homo}
\overline{\mathcal{A}}_{BV}(u;B_\radius)=\int_{B_\radius}\sqrt{1+|\nabla u|^2}dx+
P(\homogeneousmap).
\end{equation}
\end{Corollary}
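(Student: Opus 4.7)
The plan is to obtain matching upper and lower bounds for $\overline{\mathcal{A}}_{BV}(u;B_\radius)$ by exploiting the pointwise inequalities
\begin{equation*}
\max\bigl\{\sqrt{1+|\nabla v|^2},\,|Jv|\bigr\}\leq \sqrt{1+|\nabla v|^2+(Jv)^2}\leq \sqrt{1+|\nabla v|^2}+|Jv|,
\end{equation*}
which decouple the area integrand into a Sobolev-area piece and a Jacobian piece. Since $\varphi$ is Lipschitz, $u\in W^{1,1}(B_\radius;\R^2)$ with $|D^su|=0$, so the Sobolev piece can be handled by classical $L^1$-lower semicontinuity, while Theorem \ref{TV and Plateau} takes care of the Jacobian piece.

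For the upper bound, I would fix $\delta>0$, choose $v\in\mathrm{Lip}(B_1;\R^2)$ with $v_{|\partial B_1}=\varphi$ and $\int_{B_1}|Jv|dx\leq P(\varphi)+\delta$, and take the sequence $v_k$ of \eqref{rec} used in Theorem \ref{TV and Plateau} (mollified slightly if one insists on $C^1$-regularity). By the upper pointwise bound,
\begin{equation*}
\mathcal{A}(v_k;B_\radius)\leq \int_{B_\radius}\sqrt{1+|\nabla v_k|^2}\,dx+\int_{B_\radius}|Jv_k|\,dx.
\end{equation*}
The first summand splits over $B_\radius\setminus B_{\radius/k}$ (where $v_k=u$) and $B_{\radius/k}$, and on the latter a change of variable $y=kx/\radius$ bounds the contribution by $\pi(\radius/k)^2+(\radius/k)\int_{B_1}|\nabla v|\,dy\to 0$; hence it converges to $\int_{B_\radius}\sqrt{1+|\nabla u|^2}\,dx$. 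The second summand equals $\int_{B_1}|Jv|dx\leq P(\varphi)+\delta$ by \eqref{lim jac}. Letting $\delta\to 0^+$ gives the $\leq$ inequality in \eqref{eq:area_for_homo}.

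For the lower bound, I would fix an arbitrary $(v_k)\subset C^1(B_\radius;\R^2)$ with $v_k\to u$ strictly $BV(B_\radius;\R^2)$ and $\eps\in(0,\radius)$. Using the lower pointwise bound,
\begin{equation*}
\mathcal{A}(v_k;B_\radius)\geq \int_{B_\radius\setminus B_\eps}\sqrt{1+|\nabla v_k|^2}\,dx+\int_{B_\eps}|Jv_k|\,dx.
\end{equation*}
For the first summand, classical $L^1$-lower semicontinuity of the Sobolev area, combined with $u\in W^{1,1}(B_\radius\setminus B_\eps;\R^2)$, yields $\liminf_k \int_{B_\radius\setminus B_\eps}\sqrt{1+|\nabla v_k|^2}\,dx\geq \int_{B_\radius\setminus B_\eps}\sqrt{1+|\nabla u|^2}\,dx$. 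For the second summand, I would rerun the lower-bound argument of Theorem \ref{TV and Plateau}: for a.e.\ $\eps$, by Lemma \ref{lem:inheritance circ} we can extract a subsequence along which ${v_k}_{|\partial B_\eps}\to u_{|\partial B_\eps}$ strictly $BV(\partial B_\eps;\R^2)$, build the surgery map $w_k$ which coincides with $v_k$ on $B_\eps$ and linearly interpolates between ${v_k}_{|\partial B_\eps}$ and $u_{|\partial B_\eps}$ on the annulus; then $\int_{B_\radius\setminus B_\eps}|Jw_k|\,dx\to 0$ as in \eqref{jacob infinitesimal}, and by \eqref{resc}
\begin{equation*}
\liminf_{k\to+\infty}\int_{B_\eps}|Jv_k|\,dx=\liminf_{k\to+\infty}\int_{B_\radius}|Jw_k|\,dx\geq P(\varphi_\radius)=P(\varphi).
\end{equation*}
Taking $\eps\to 0^+$ and using the $L^1$-integrability of $\sqrt{1+|\nabla u|^2}$ on $B_\radius$ delivers the $\geq$ inequality.

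The main obstacle is that the pointwise bound $\sqrt{1+a^2+b^2}\leq \sqrt{1+a^2}+|b|$ is strict, so one cannot simultaneously extract the Sobolev-area and the Jacobian contributions on the same region without double-counting the constant $1$. The localization over $B_\eps$ versus $B_\radius\setminus B_\eps$ breaks this obstruction: near the origin only the Jacobian bound is used (recovering $P(\varphi)$ in the limit), and away from the origin only the Sobolev-area bound is used (recovering $\int_{B_\radius\setminus B_\eps}\sqrt{1+|\nabla u|^2}\,dx$). Because $u\in W^{1,1}(B_\radius;\R^2)$, the missing Sobolev contribution on $B_\eps$ vanishes as $\eps\to 0^+$, making the two bounds meet.
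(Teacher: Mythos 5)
Your proposal is correct and follows essentially the same structure as the paper's proof: the same recovery sequence \eqref{rec} with the subadditivity $\sqrt{1+a^2+b^2}\leq\sqrt{1+a^2}+|b|$ for the upper bound, and the same split into $B_\eps$ (Jacobian only, handled by \eqref{lower limit}) and $B_\radius\setminus B_\eps$ (Sobolev area only) for the lower bound. The only minor difference is that you invoke the classical $L^1$-lower semicontinuity of the convex integrand $\sqrt{1+|\nabla\cdot|^2}$ on the annulus, whereas the paper keeps the full area integrand there and appeals to the Acerbi--Dal Maso lower-semicontinuity theorem; both are valid since $u$ is Sobolev away from the origin, so there is no singular part to account for.
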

\begin{proof}
For the lower bound, suppose that $v_k\in C^1(B_\radius;\R^2)$ is such that $v_k\rightarrow u$ strictly $BV(B_\radius;\R^2)$. Now, let $\eps<\radius$ such that \eqref{strict} holds, and write
${\mathcal A}(v_k;B_\radius) = {\mathcal A}(v_k;B_\radius \setminus B_{\eps})
+
{\mathcal A}(v_k;B_{\eps}) \geq 
{\mathcal A}(v_k;B_\radius \setminus B_{\eps})
+ \int_{B_{\eps}} \vert J v_k\vert dx$,
so that, by \cite[Theorem 3.7]{AD},
\begin{equation}\nonumber
\begin{aligned}
\lim_{k\rightarrow+\infty}
{\mathcal{A}(v_k;B_\radius)}&\geq\liminf_{k\rightarrow+\infty}\mathcal{A}
(v_{k};B_\radius\setminus B_{\varepsilon})+\liminf_{k\rightarrow+\infty}\int_{B_{\epsilon}}|Jv_{k}|dx
\\
&\geq\int_{B_\radius\setminus B_{\varepsilon}}\sqrt{1+|\grad u|^2}dx+\liminf_{k\rightarrow+\infty}\int_{B_{\epsilon}}|Jv_{k}|dx.
\end{aligned}
\end{equation}
We now apply \eqref{lower limit} and next pass to the limit as 
$\eps\rightarrow 0^+$ to get the lower bound in \eqref{eq:area_for_homo}.

Concerning the proof of the 
upper bound for \eqref{eq:area_for_homo}, consider the sequence $(v_k)$ 
defined in \eqref{rec},
which converges to $u$ in $W^{1,1}(B_\radius; \R^2)$. Then, upon extracting a subsequence such that $(\grad v_k)$ converges 
almost everywhere to $\grad u$, by \eqref{lim jac} and dominated convergence we have,
using the inequality $\sqrt{1 + a^2 + b^2  + c^2} \leq \sqrt{1 + a^2 + b^2} + \vert c\vert$ for
$a,b,c \in \R$,
\begin{equation}\nonumber
\begin{aligned}
\overline{\mathcal{A}}_{BV}(u;B_\radius)&\leq\limsup_{k\rightarrow + \infty}\mathcal{A}(v_k;B_\radius)\leq\lim_{k
\rightarrow +\infty}\int_{B_\radius}\sqrt{1+|\grad v_k|^2}dx+\lim_{k\rightarrow
+\infty}\int_{B_\radius}|Jv_k|dx\\
&=\int_{B_\radius}\sqrt{1+|\grad u|^2}dx+\int_{B_1}|Jv|dx,
\end{aligned}
\end{equation}
for any $v\in \mathrm{Lip}(B_1;\R^2)$ such that $v=\homogeneousmap$ on $\partial B_1$.
Passing to the infimum on the right hand side we 
obtain the upper bound inequality in \eqref{eq:area_for_homo}.
\end{proof}

Now, we treat the case $\gamma\in BV(\Suno;\R^2)$. We recall that, by Proposition \ref{homog extension is BV}, its homogeneouos extension $u$ is still $BV(B_\radius;\R^2)$. 
\begin{Theorem}\label{Plateau for BV}
Let $\gamma\in BV(\Suno;\R^2)$ and $u$ as in \eqref{hom map}. Let $\widetilde\gamma:\Suno\rightarrow\R^2$ be as in Lemma \ref{strict implies uniform 3}. Then
\begin{align}\label{TV for homo BV}
\overline{{\totvarjac}}_{BV}(u,B_\radius)= \rilP (\gamma)=P(\widetilde\gamma).
\end{align}
\end{Theorem}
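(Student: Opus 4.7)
The identity $\rilP(\gamma)=P(\widetilde\gamma)$ is exactly Lemma~\ref{lem_plateaurel}, so the task reduces to proving $\overline{\totvarjac}_{BV}(u,B_\radius)=\rilP(\gamma)$. I split this into upper and lower bounds, both reducing to the already-established Lipschitz case Theorem~\ref{TV and Plateau}.

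For the upper bound, I plan to approximate $\gamma$ by Lipschitz curves. Pick $(\varphi_n)\subset\mathrm{Lip}(\Suno;\R^2)$ converging strictly to $\gamma$, and let $u_n$ be its $0$-homogeneous extension. By Proposition~\ref{homog extension is BV}, $|Du_n|(B_\radius)=\radius|\dot\varphi_n|(\Suno)\to\radius|\dot\gamma|(\Suno)=|Du|(B_\radius)$; the $L^1$-convergence $u_n\to u$ in $B_\radius$ follows from $\varphi_n\to\gamma$ in $L^1(\Suno;\R^2)$ by integration in polar coordinates. Hence $u_n\to u$ strictly in $BV(B_\radius;\R^2)$. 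Theorem~\ref{TV and Plateau} applied to each $u_n$ yields smooth $(v_k^{(n)})\subset C^1(B_\radius;\R^2)$ with $v_k^{(n)}\to u_n$ strictly as $k\to+\infty$ and $\int_{B_\radius}|Jv_k^{(n)}|\,dx\to P(\varphi_n)$. A standard diagonal extraction $w_n:=v_{k_n}^{(n)}$ (choosing $k_n$ so large that the $L^1$-error, the total-variation gap and the Jacobian gap are all smaller than $1/n$) yields $w_n\to u$ strictly in $BV(B_\radius;\R^2)$ and $\int_{B_\radius}|Jw_n|\,dx\to\lim_n P(\varphi_n)=\rilP(\gamma)$, where the last equality is Corollary~\ref{cor_continuity_P}. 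This gives $\overline{\totvarjac}_{BV}(u,B_\radius)\leq\rilP(\gamma)$.

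For the lower bound, let $(v_k)\subset C^1(B_\radius;\R^2)$ be any sequence with $v_k\to u$ strictly in $BV(B_\radius;\R^2)$, and pass to a subsequence realizing $L:=\liminf_k\int_{B_\radius}|Jv_k|\,dx$. By Lemma~\ref{lem:inheritance circ}, for almost every $\rho\in(0,\radius)$ I can extract a further subsequence (depending on $\rho$) such that $v_{k_h}\mres\partial B_\rho\to u\mres\partial B_\rho$ strictly in $BV(\partial B_\rho;\R^2)$. Fix any such $\rho$. Since $v_{k_h}\mres\overline{B_\rho}$ is a Lipschitz competitor in the Plateau problem on $B_\rho$ with boundary datum $v_{k_h}\mres\partial B_\rho$, we have $\int_{B_\rho}|Jv_{k_h}|\,dx\geq P(v_{k_h}\mres\partial B_\rho)$. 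In angular coordinates $u\mres\partial B_\rho$ is the rescaled curve $\gamma_\rho$ from \eqref{def resc}, and the completion produced by Lemma~\ref{strict implies uniform 3} coincides with $(\widetilde\gamma)_\rho$; combining Corollary~\ref{cor_continuity_P} (applied on $\partial B_\rho$) with the rescaling invariance \eqref{resc} of $P$ yields $P(v_{k_h}\mres\partial B_\rho)\to P((\widetilde\gamma)_\rho)=P(\widetilde\gamma)=\rilP(\gamma)$. Passing to the limit in $h$ gives
$$L=\lim_h\int_{B_\radius}|Jv_{k_h}|\,dx\geq\liminf_h\int_{B_\rho}|Jv_{k_h}|\,dx\geq\rilP(\gamma),$$
and infimizing over $(v_k)$ completes the lower bound.

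The main obstacle is the careful handling of the lower bound: one must ensure that the strict-convergence slicing provided by Lemma~\ref{lem:inheritance circ} is compatible with the relaxed Plateau functional on each circumference, despite the fact that the slicing subsequence depends on $\rho$. This is resolved by the invariance of $P$ under reparametrization \eqref{inv} and rescaling \eqref{resc}, together with the independence of $\widetilde{(\cdot)}$ from the approximating sequence (last assertion of Lemma~\ref{strict implies uniform 3}), which together guarantee that the limit on $\partial B_\rho$ is the intrinsic quantity $P(\widetilde\gamma)$ regardless of the choice of slicing subsequence.
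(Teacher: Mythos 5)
Your proof is correct and follows essentially the same strategy as the paper: both bounds are reduced to the Lipschitz case (Theorem~\ref{TV and Plateau}) via the homogeneous extensions $u_n$ of a strictly approximating Lipschitz sequence $\varphi_n$ for the upper bound, and via the slicing Lemma~\ref{lem:inheritance circ} combined with Corollary~\ref{cor_continuity_P} and rescaling invariance \eqref{resc} for the lower bound. The only cosmetic difference is in the upper bound: you carry out an explicit diagonal extraction from the recovery sequences of each $u_n$, whereas the paper short-circuits this by invoking the lower semicontinuity of the relaxed functional $\overline{\totvarjac}_{BV}(\cdot,B_\radius)$ along the strictly convergent sequence $u_n\to u$ and then identifying $\overline{\totvarjac}_{BV}(u_n,B_\radius)=P(\varphi_n)$ via Theorem~\ref{TV and Plateau}; the two are equivalent.
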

\begin{proof}
In order to show the upper bound inequality, consider a Lipschitz 
sequence $\varphi_k:\Suno\rightarrow\R^2$  
converging to $\gamma$ strictly $BV(\Suno;\R^2)$ (e.g. a mollifying sequence). Then, by Lemma \ref{strict implies uniform 3}, there exists a equi-Lipschitz reparameterization $\widetilde\varphi_k$ of $\varphi_k$ 
that converges to $\widetilde\gamma$ uniformly (up to extracting a subsequence). Set 
\begin{align}\label{uk}
u_k(x)=\varphi_k\left(\frac{x}{|x|}\right)\quad\forall x\in B_\radius\setminus\{(0,0)\},
\end{align}
then $u_k\in W^{1,1}(B_\radius;\R^2)$ and $u_k\rightarrow u$ strictly $BV(B_\radius;\R^2)$, since
\begin{align*}
&\|u_k-u\|_{L^1(B_1;\R^2)}\leq\|\varphi_k-\gamma\|_{L^1(\Suno;\R^2)}
\rightarrow 0,
\\
& \int_{B_\radius}|\nabla u_k|dx=\radius\int_{\Suno}|\dot\varphi_k|d\mathcal{H}^1\rightarrow\radius|\dot\gamma|(\Suno)=|Du|(B_\radius),
\end{align*}
where we used Proposition \ref{homog extension is BV}.
Now, by lower semicontinuity of $\overline{{\totvarjac}}_{BV}(\cdot,B_\radius)$, Theorem \ref{TV and Plateau}, \eqref{inv}, and Lemma \ref{lem:continuity_of_P}, we have
$$
\overline{{\totvarjac}}_{BV}(u,B_\radius)\leq\liminf_{k\rightarrow +\infty}\overline{{\totvarjac}}_{BV}(u_k,B_\radius)=\liminf_{k\rightarrow +\infty}P(\varphi_k)=\liminf_{k\rightarrow +\infty}P(\widetilde\varphi_k)=P(\widetilde\gamma).
$$
Let us prove the lower bound inequality.
Assume that $v_k\in C^1(B_\radius;\R^2)$ is such that $v_k\rightarrow u$ strictly $BV(B_\radius;\R^2)$ and
$$
\lim_{k\rightarrow+\infty}\int_{B_\radius}|Jv_k|dx=\overline{{\totvarjac}}_{BV}(u,B_\radius).
$$
 We use Lemma \ref{lem:inheritance circ} to fix $\eps<\radius$ and a subsequence $(v_{k_j})\subset(v_k)$ such that $v_{k_j}\mres\partial B_\eps\rightarrow u\mres\partial B_\eps$ strictly $BV(\partial B_\eps;\R^2)$.
According to \eqref{def resc}, we have $u\mres\partial B_\eps=\gamma_\eps$. So, let $\widetilde\gamma_\eps$ be the Lipschitz curve of Lemma \ref{strict implies uniform 3} associated\footnote{We identify $\partial B_\eps$ with $[0,2\pi\eps]$.} to $\gamma_\eps$.
Using Corollary \ref{cor_continuity_P} and \eqref{resc}, we conclude
\begin{align}\label{lower bound ineq}
\overline{{\totvarjac}}_{BV}(u,B_\radius)&\geq\liminf_{j\rightarrow +\infty}\int_{B_\eps}|Jv_{k_j}|dx\geq\liminf_{j\rightarrow +\infty} P(v_{k_j}\mres\partial B_\eps)=\rilP(\gamma_\eps)=P(\widetilde\gamma_\eps)=P(\widetilde\gamma).
\end{align}
\end{proof}

\begin{Remark}\label{rem u tilde}
Setting $\widetilde{u}(x):=\widetilde{\gamma}\left(\frac{x}{|x|}\right)$, then $u\in W^{1,1}(B_\radius;\R^2)$. So, by Theorem \ref{TV and Plateau} and Theorem \ref{Plateau for BV}, we have
\begin{align}\label{u e u tilde}
\overline{{\totvarjac}}_{BV}(\widetilde{u},B_\radius)=\overline{{\totvarjac}}_{BV}(u,B_\radius).
\end{align}
\end{Remark}

We are in the position to prove Theorem \ref{relaxed area thm}.\\
{\it Proof of Theorem \ref{relaxed area thm}.}
For the lower bound, suppose that $v_k\in C^1(B_\radius;\R^2)$ is such that $v_k\rightarrow u$ strictly $BV(B_\radius;\R^2)$. Now, let $\eps<\radius$ such that \eqref{strict} holds, and write
${\mathcal A}(v_k;B_\radius) = {\mathcal A}(v_k;B_\radius \setminus B_{\eps})
+
{\mathcal A}(v_k;B_{\eps}) \geq 
{\mathcal A}(v_k;B_\radius \setminus B_{\eps})
+ \int_{B_{\eps}} \vert J v_k\vert dx$,
so that, by \cite[Theorem 3.7]{AD},
\begin{equation}\nonumber
\begin{aligned}
\lim_{k\rightarrow+\infty}
{\mathcal{A}(v_k;B_\radius)}&\geq\liminf_{k\rightarrow+\infty}\mathcal{A}
(v_{k};B_\radius\setminus B_{\varepsilon})+\liminf_{k\rightarrow+\infty}\int_{B_{\epsilon}}|Jv_{k}|dx
\\
&\geq\int_{B_\radius\setminus B_{\varepsilon}}\sqrt{1+|\grad u|^2}dx+|D^su|(B_\radius\setminus B_\eps)+\liminf_{k\rightarrow+\infty}\int_{B_{\epsilon}}|Jv_{k}|dx.
\end{aligned}
\end{equation}
We now apply \eqref{lower limit} and next pass to the limit as 
$\eps\rightarrow 0^+$ to get the lower bound in \eqref{area BV}.

Concerning the proof of the 
upper bound for \eqref{area BV}, consider the sequence $(u_k)\subset W^{1,1}(B_\radius;\R^2)$ 
defined in \eqref{uk},
which converges to $u$ strictly $BV(B_\radius; \R^2)$. 
Let us prove that 
\begin{align}\label{area strict}
\lim_{k\to+\infty}\int_{B_\radius}\sqrt{1+|\grad u_k|^2}dx=\int_{B_\radius}\sqrt{1+|\nabla u|^2}dx+|D^su|(B_\radius).
\end{align}
In polar coordinates, we get
$$
\int_{B_\radius}\sqrt{1+|\grad u_k|^2}dx=\int_0^\radius\int_0^{2\pi}\rho\sqrt{1+\frac{|\dot{\bar\varphi}_k(\theta)|^2}{\rho^2}}d\theta d\rho.
$$
For a fixed $\rho\in(0,\radius)$, consider $f_\rho(\xi)=\rho\sqrt{1+\frac{|\xi|^2}{\rho^2}}$, $\xi\in\R^2$. Then, $f_\rho$ is convex on $\R^2$. Now, if $\mu\in\mathcal{M}([0,2\pi];\R^2)$, one can consider the measure $f_\rho(\mu)\in\mathcal{M}^+([0,2\pi])$ defined as\footnote{See Theorem 2' in \cite{GS}: notice that $f^*_\rho=|\cdot|$ for every $\rho\in(0,\radius)$, where $f^*_\rho$ is the recession function associated to $f_\rho$.}
$$
f_\rho(\mu)(A)=\int_A\rho\sqrt{1+\frac{|a(\theta)|^2}{\rho^2}}d\theta+|\mu^s|(A),
$$
for any Borel set $A\subseteq[0,2\pi]$,
where $\mu^a=a\mathscr{L}^2$ for some $a\in L^1([0,2\pi])$. By \cite[Theorem 4]{GS}, $f_\rho(\cdot)$ is continuous w.r.t. the approximation by convolution. In particular, choosing $\mu:=\dot{\bar\gamma}\in\mathcal{M}([0,2\pi];\R^2)$ and $A=[0,2\pi]$, for every $\rho\in(0,\radius)$ we have
\begin{align*}
\lim_{k\to+\infty}f_\rho(\dot{\bar\varphi}_k)([0,2\pi])&=\lim_{k\to+\infty}\int_0^{2\pi}\rho\sqrt{1+\frac{|\dot{\bar\varphi}_k(\theta)|^2}{\rho^2}}d\theta\\
&=\int_0^{2\pi}\rho\sqrt{1+\frac{|\dot{\bar\gamma}^a(\theta)|^2}{\rho^2}}d\theta+|\dot{\gamma}^s|(\Suno)\\
&=f_\rho(\dot{\bar\gamma})([0,2\pi]).
\end{align*}
Integrating in $(0,\radius)$, by dominated convergence we infer
\begin{align*}
\lim_{k\to+\infty}\int_{B_\radius}\sqrt{1+|\grad u_k|^2}dx&=\lim_{k\to+\infty}\int_0^\radius\int_0^{2\pi}\rho\sqrt{1+\frac{|\dot{\bar\varphi}_k(\theta)|^2}{\rho^2}}d\theta d\rho\\
&=\int_0^\radius\int_0^{2\pi}\rho\sqrt{1+\frac{|\dot{\bar\gamma}^a(\theta)|^2}{\rho^2}}d\theta d\rho+\radius|\dot\gamma^s|(\Suno)\\
&=\int_{B_\radius}\sqrt{1+|\nabla u|^2}dx+|D^su|(B_\radius),
\end{align*}
where we used \eqref{nabla u} and \eqref{var split}. Therefore, we obtain \eqref{area strict}.\\
Finally, by lower semicontinuity of $\overline{\mathcal{A}}_{BV}(\cdot,B_\radius)$ and by Corollary \ref{area and Plateau}, we conclude
\begin{equation}\nonumber
\begin{aligned}
\overline{\mathcal{A}}_{BV}(u;B_\radius)&\leq\liminf_{k\rightarrow + \infty}\overline{\mathcal{A}}_{BV}(u_k;B_\radius)=\lim_{k\rightarrow + \infty}\left[\int_{B_\radius}\sqrt{1+|\grad u_k|^2}dx+P(\varphi_k)\right]\\
&=\int_{B_\radius}\sqrt{1+|\nabla u|^2}dx+|D^su|(B_\radius)+\rilP(\gamma).
\end{aligned}
\end{equation}
\qed

\begin{Remark}
We notice that, as a function of the set variable, $\overline{\totvarjac}_{BV}(u,\cdot)$ is a (finite) measure. Precisely, for every open set $A\subset B_\radius$
$$
\overline{\totvarjac}_{BV}(u;A)=\rilP(\gamma)\delta_0(A).
$$
Indeed, if $0\in A$ then $B_\eps\subset A$ for some $\eps\in(0,\radius)$ and we can argue as in \eqref{lower bound ineq}. On the other hand, suppose that $0\notin A$ and consider $u_k$ as in \eqref{uk}. Then, ${u_k}_{|A}\in$ Lip$(A;\R^2)$ and converges strictly $BV(A;\R^2)$ to $u_{|A}$. Since the image of $u_k$ has zero Lebesgue measure, by lower semicontinuity of $\overline{\totvarjac}_{BV}(\cdot\,;A)$, we get that $\overline{\totvarjac}_{BV}(u;A)=0$.\\
In the same way, one can prove that for every open set $A\subset B_\radius$
$$
\overline{\mathcal{A}}_{BV}(u;A)=\int_A\sqrt{1+|\nabla u|^2}dx+|D^su|(A)+\rilP(\gamma)\delta_0(A).
$$
Therefore, also $\overline{\mathcal{A}}_{BV}(u;\,\cdot)$ is a measure and \eqref{area BV} is an integral representation.
\end{Remark}

\begin{Remark}[{\bf On the Plateau problem \eqref{Plateau1}}]
Let $\varphi:\Suno\to\R^2$ be Lipschitz. From \cite[Theorem 1.3]{Cr}, there exists a least area mapping $v\in W^{1,p}(B_1;\R^2)$, for some $p>2$, spanning $\varphi$, i.e. realizing the infimum of the total variation of the Jacobian determinant in the class of Sobolev maps in $W^{1,p}(B_1;\R^2)$ whose trace on $\partial B_1$ is $\varphi$. In truth, one can prove that the least area mapping is Lipschitz, so that the Plateau problem \eqref{Plateau1} attains a minimum. The proof is a consequence of results contained in \cite{CrSt}: interestingly, it seems that one needs to pass through a more general metric result, concerning spaces with upper curvature bounds.
\end{Remark}

\textsc{Acknowledgements:}
The author is indebted to Giovanni Bellettini and Riccardo Scala for having suggested to write this paper and he thanks Paul Creutz for useful discussions.
The author is member of the Gruppo Nazionale per l'Analisi Matematica, la Probabilit\`a e le loro Applicazioni (GNAMPA) of the INdAM of Italy.


\end{document}